\newtheorem{thm}{Theorem}[section]
\newtheorem{cor}[thm]{Corollary}
\newtheorem{lem}[thm]{Lemma}
\newtheorem{prop}[thm]{Proposition}
\theoremstyle{definition}
\newtheorem{defn}[thm]{Definition}
\newtheorem{nim}[thm]{}
\newtheorem{set}[thm]{Setup}
\newcommand{\Tr}{\operatorname{Tr}}
\newcommand{\Aut}{\operatorname{Aut}}
\newcommand{\Br}{\operatorname{Br}}
\newcommand{\Sum}{\displaystyle\sum}
\begin{document}

\begin{frontmatter}

%% Title, authors and addresses

%% use the tnoteref command within \title for footnotes;
%% use the tnotetext command for the associated footnote;
%% use the fnref command within \author or \address for footnotes;
%% use the fntext command for the associated footnote;
%% use the corref command within \author for corresponding author footnotes;
%% use the cortext command for the associated footnote;
%% use the ead command for the email address,
%% and the form \ead[url] for the home page:
%%
%% \title{Title\tnoteref{label1}}
%% \tnotetext[label1]{}
%% \author{Name\corref{cor1}\fnref{label2}}
%% \ead{email address}
%% \ead[url]{home page}
 %%%\fntext[label2]{}
%% \cortext[cor1]{}
%% \address{Address\fnref{label3}}
%% \fntext[label3]{}

\title{Isomorphisms of fusion subcategories on permutation algebras}

%% use optional labels to link authors explicitly to addresses:
%% \author[label1,label2]{<author name>}
%% \address[label1]{<address>}
%% \address[label2]{<address>}
\author[UB,UT]{Tiberiu Cocone\c t}
\ead{tiberiu.coconet@math.ubbcluj.ro}
\address[UB]{Faculty of Economics and Business Administration, Babe\c s-Bolyai University, Str. Teodor Mihali, nr.58-60 , 400591 Cluj-Napoca, Romania}
\address[UT]{Department of Mathematics, Technical University of Cluj-Napoca, Str. G. Baritiu 25,  Cluj-Napoca 400027, Romania}
%\author{Andrei Marcus}
%\ead{andrei.marcus@math.ubbcluj.ro}
%\address{Faculty of Mathematics, Babe\c s-Bolyai University, Cluj-Napoca, Romania}
\author[UT]{Constantin-Cosmin Todea \corref{cor1}}
\ead{Constantin.Todea@math.utcluj.ro}
%\address[Ut]{Department of Mathematics, Technical University of Cluj-Napoca, Str. G. Baritiu 25,  Cluj-Napoca 400027, Romania}

\cortext[cor1]{Corresponding author}

\begin{abstract} We prove two isomorphisms of some subcategories of fusion systems defined on $N$-interior $p$-permutation $G$-algebras by  extending the main results obtained by M. E. Harris  in \cite{Ha}.
\end{abstract}

\begin{keyword}
finite group, group algebra, block, defect group, $G$-algebra, $p$-permutation, fusion system.
\MSC[2010] 20C20 
\end{keyword}
%% MSC codes here, in the form: \MSC code \sep code

\end{frontmatter}
%% Start line numbering here if you want
% \linenumbers

%% main text
\section{Introduction}\label{sec1}
There are published  results  obtained in the case of blocks of  group algebras that are extended to the general situation of $p$-permutation $G$-algebras ($p$ is a prime and $G$ is a finite group), for instance this is the case of the Brauer pairs of $p$-blocks, that were introduced in this general situation in  \cite{BrPu}. The first author introduced the definition of covering points \ref{definition41} on certain $p$-permutation algebras (see Section \ref{sec5}), which  was used to extend the Harris-Kn\"{o}rr correspondence. M. E. Harris obtained in \cite{Ha} two  isomorphisms of categories involving the generalized Brauer pairs  determined by a $G$-invariant block of  $kN,$ where $N$ is normal in $G.$ The main purpose of this paper is to  extend these results to a class of $p$-permutation algebras.  

Throughout the paper $k$ is a field of characteristic $p,$ not necessarily algebraically closed and, $N$ is a normal subgroup of the finite group $G$ such that $p$ divides $|N|.$  Section \ref{sec2} contains a brief discussion  involving saturated triples and states the main result of this section, Proposition \ref{prop22}, which is important  for the following sections. Section \ref{sec3} proves an extension of a useful result, that will be used to show an important ingredient (Corollary \ref{corollary34}) for the proof our first main result. Section \ref{sec4} contains the first main result of the paper, that is Theorem \ref{theorem39}. This theorem is an \textit{extension} of \cite[Theorem 6]{Ha} to a class of $p$-permutation algebras. Sections \ref{sec5} and \ref{sec6} gather information on covering points on $p$-permutation algebras. Finally, in Section \ref{sec7} we prove the last two main results of the paper, namely Theorem \ref{thecorresp} and Theorem \ref{theorem63}. In Theorem \ref{thecorresp} we obtain a bijective correspondence involving covering points of some group acted $p$-permutation algebras
% between the points of a $A^G$ (where $A$ is some $p$-permutation $G$-algebra, see Setup \ref{set}) and the points of $X^H$ (where $X$ is the so-called extended Brauer quotient and $H=N_G(Q,f_Q)$ for some Brauer pair $(Q,f_Q),$ which can be chosen under the assumptions of Setup \ref{set}).
and  Theorem \ref{theorem63} is a generalization of \cite[Corollary 8]{Ha}.

Extending results from block algebras to  $p$-permutation algebras is  still of interest. For example \cite[Part IV, Section 7, Question 5]{AsKeOl} contains  an open question about the realizibility of any saturated fusion system: given a saturated
fusion system $\mathcal{F}$ on a finite $p$-group $D$, are there  a finite group $G$, a
$p$-permutation algebra $A$, and a primitive idempotent $b$ of $A^G$ such that
$(A,b,G)$ is a saturated triple (see Section \ref{sec2}) whose  corresponding fusion system
is isomorphic to $\mathcal{F}$? A different motivation for the results in this paper is the fact that we are able to generalize from blocks of group algebras over finite groups to points of some $G$-algebras, using heavily techniques different than those of Harris in \cite{Ha}, the result \cite[Lemma 8.6.4]{Link}, which is an important ingredient of \cite[Theorem 8.6.3]{Link}.

For basic facts regarding points on $G$-algebras we follow \cite{Th} and for the language of Brauer pairs in the context of $p$-permutation algebras we refer to \cite{AsKeOl} and \cite{BrPu}. All algebras that appear in this paper are assumed to be finite dimensional over $k$. We use  the   lifting idempotents techniques as presented in \cite[ Section 3]{Pu}.   $A^*$ denotes the group of units of a ring  $A$ and we use conjugation on the left:$~^af:=afa^{-1}$ for any $f\in A, a\in A^*.$ Depending on the situation,  we sometimes  let "$\cdot$" denote the usual multiplication between two elements of  a $k$-algebra. Most frequently    we  only use juxtaposition.

We recall the basic facts about points, pointed groups and defect groups attached to pointed groups, for details 
 see \cite[Section 13 and Section 18]{Th}. For any $G$-algebra $A$ and for any subgroup $H$ of $G,$ a point of $A^H$ is the $(A^H)^*$-conjugacy class of a primitive idempotent of $A^H$. 
If $\alpha $ is a point of $A^H,$ that is 
$\alpha=\{aia^{-1}| a\in (A^H)^* \},$ the pair
$(H,\alpha)$   is called a pointed group. We denote this pair by  $H_{\alpha}.$

 A $p$-subgroup $D$ of $H$ is a defect group of $H_{\alpha},$ or a defect group of $\alpha,$ if $\alpha\subseteq A_D^H$ and $\Br^A_D(\alpha)\neq 0,$ here $\Br^A_D$ is the Brauer homomorphism determined by $D.$ If $\alpha\subseteq A^H$ is a point admitting defect group $D$ then  any  $i\in \alpha$ may be regarded as a point of $iA^Hi=(iAi)^H$ consisting of a unique element, namely $\{i\}.$ It is clear that $\{i\}$ has defect group $D.$ In this context we say that $i$ has defect group $D.$ Note that $D$ is a defect group of $\alpha$ if and only if it is a defect group for any $i\in \alpha. $
 
 Any $G$-algebra embedding  $\mathcal{F}:A \rightarrow B$
  induces an injective map between the points of $A^H$ into  the points of $B^H,$ for any subgroup   $H$ of $G.$ This injective map preserves the defect (pointed groups) between  the corresponding points. For details see see \cite[p. 58]{Th} and  \cite[Proposition 15.1 (a), (c) ]{Th}.
We  freely use various characterizations of defect groups, see also \cite[ 18.3 (iv)]{Th}. Even though the main results work only in special cases of interior $G$-algebras, for a finite group $G,$ we \textit{emphasize} that each section has its own set of assumptions representing the most general setting for the results developed there.  
%%%%%%%%%%%%%%%%%%%%%%%%%%%%%%%%%%%%%%%%%%%%%%%%%%%%%%%%%%%%%%%%%%%%%%%%%%%%%%%%%%%%%%%%%%%%%

\section{On the definition of saturated triples}\label{sec2}
First  we prove a technical lemma regarding idempotents   in a  $k$-algebra $A,$ which is a consequence of \cite[Proposition 3.19]{Pu}.
\begin{lem}\label{Lemma71} Let $A$ be a unital $k$-algebra. Let $e,f$ be two idempotents of $A$ such that $f\in Z(A).$ The following statements are true.
\begin{itemize}
\item[a)] If $f$ is primitive in $A$ and $fe\neq 0$ then $f e=e f=f;$
\item[b)] If $e$ is primitive in $A$ and $f e\neq 0$ then $fe=e f=e.$
\end{itemize}
\end{lem}
\begin{proof}
Fix a decomposition of $e$ into pairwise orthogonal primitive idempotents $e=\sum_{i\in I}i.$ According to \cite[Proposition 3.19]{Pu} for any idempotent $f$ (central or non-central) there is an $A$-conjugate ${}^af$ such that 
$e\cdot{}^af={}^af \cdot e.$ Further, if $$e\cdot{}^af={}^af \cdot e\neq 0$$ then $$e\cdot{}^af={}^af \cdot e=\sum_{i\in S}i,$$ where $S\subseteq I$ such that 
$$\sum_{i\in S}i=e\cdot \left(\sum_{i\in S}i\right)=\left(\sum_{i\in S}i\right)\cdot e,$$
and
$$\sum_{i\in S}i={}^af\cdot\left(\sum_{i\in S}i\right)=\left(\sum_{i\in S}i\right)\cdot {}^af.$$
Moreover, for any $i\in I\setminus S$ we have $i\cdot {}^af={}^af\cdot i=0.$ Clearly, if either $e$ or $f$ is primitive in $A$ then $|S|=1.$ Since $f\in Z(A)$ the rest of the proof is immediate.

\end{proof}
In this section we let $A$ denote a $p$-permutation $G$-algebra and $c$ denote a primitive idempotent of the subalgebra $A^G$ of $A.$  
%To each triple $(A,c,G)$, there is associated a $G$-poset of  Brauer pairs. 
We recall that
$(Q,e)$ is an $(A,c,G)$- Brauer pair if $Q$ is a $p$-subgroup of $G$ and $e$ is a primitive idempotent  of $Z(A(Q))$ satisfying  $\Br_Q^A(c)e\neq 0.$ The set of $(A,c,G)$-Brauer pairs is a $G$-poset with the order relation denoted as usual "$\leq$", see \cite[Part IV, Definition 2.9]{AsKeOl}.
The definition of a \textit{saturated triple}  $(A,c,G),$ that was first introduced in  \cite[Introduction p.92]{KeKuMi}, requires  $c$ to be a \textit{central} idempotent of $A$ and, for any $(A,c,G)$-Brauer pair $(Q,e),$ the idempotent $e$ to \textit{be primitive} in $A(Q)^{C_G(Q,e)}.$ The reason for  calling $(A,c,G)$ a saturated  triple is a consequence of \cite[Theorem 1.6]{KeKuMi}. There, the authors show that if, $(A,c,G)$ is a triple which satisfy conditions (i) and (ii) of \cite[Theorem 1.6]{KeKuMi}, then the poset of subpairs, of any maximal  $(A,c,G)$-Brauer pair,  is a \textit{saturated} fusion system. Verifying the proof of \cite[Theorem 1.6]{KeKuMi}, we noticed that the condition (i) of this theorem, the fact that $c$ is a central idempotent of $A,$ is needed only in \cite[Lemma 3.2]{KeKuMi}.   However, by   Proposition \ref{prop22} below we can  now state that the conclusion of  \cite[Lemma 3.2]{KeKuMi} still hold by replacing condition (i) of  \cite[Theorem 1.6]{KeKuMi} with condition (ii) of  \cite[Theorem 1.6]{KeKuMi}. 
\begin{prop}\label{prop22} Let $A$ be a $p$-permutation $G$-algebra and $c$ be a primitive idempotent of $A^G$. Let $Q$ be a $p$-subgroup of $G$ and $e$ a block of $A(Q)$. If $e$ is a primitive idempotent of $(A(Q))^{C_G(Q,e)}$ then $(Q,e)$ is a $(A,c,G)$-Brauer pair if and only if $\Br_Q^A(c)e=e.$
\end{prop}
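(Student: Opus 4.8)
The plan is to recognise $\Br_Q(c)e=e$ as the relation one reads off a defining chain of containments of $(A,c,G)$-Brauer pairs, following the pattern of \cite{Ha} and \cite[Part IV]{AsKeOl}; the one genuinely new feature is that $\Br_Q(c)$ is merely an idempotent of $A(Q)$, not a central one, and Lemma \ref{Lemma71} is what compensates. Since $c\in A^G$, the surjective algebra homomorphism $\Br_Q\colon A^Q\to A(Q)$ sends $c$ to an idempotent $\Br_Q(c)$ fixed by $N_G(Q)$; because $C_G(Q,e)\le C_G(Q)\le N_G(Q)$ this idempotent lies in $B:=(A(Q))^{C_G(Q,e)}$, while $e\in Z(A(Q))\subseteq Z(B)$ is primitive in $B$ by hypothesis. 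Hence $\Br_Q(c)e=e\Br_Q(c)$ is an idempotent of $B$ by Lemma \ref{Lemma71}(i), and Lemma \ref{Lemma71}(iii), applied in $B$ with $e$ the primitive idempotent, gives $\Br_Q(c)e\in\{0,e\}$. Thus $\Br_Q(c)e=e$ is equivalent to $\Br_Q(c)e\neq0$; the same remark holds verbatim with $Q$ replaced by any $p$-subgroup and $e$ by a block of the corresponding Brauer quotient that is primitive in the corresponding fixed-point subalgebra, and this is what lets one replace, throughout the classical argument, the splitting of a central block idempotent by orthogonality bookkeeping inside those fixed-point subalgebras.

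For the implication ``$(Q,e)$ is an $(A,c,G)$-Brauer pair $\Rightarrow\Br_Q(c)e=e$'' I would take a defining chain of normal containments $(1,c)=(R_0,e_0)\trianglelefteq(R_1,e_1)\trianglelefteq\cdots\trianglelefteq(R_n,e_n)=(Q,e)$ and induct on $n$; the case $n=0$ is $\Br_1(c)\,c=c^2=c$, as $\Br_1$ is the identity map of $A=A(1)$. At a step $R_i\trianglelefteq R_{i+1}$ the Brauer homomorphisms factor as $\Br_{R_{i+1}}=\beta\circ\Br_{R_i}$ on $A^{R_{i+1}}$, where $\beta$ is the Brauer homomorphism of the $N_G(R_i)$-algebra $A(R_i)$ with respect to $R_{i+1}/R_i$ under the canonical identification $A(R_i)(R_{i+1}/R_i)=A(R_{i+1})$. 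Since $e_i$ is $R_{i+1}$-stable and $\Br_{R_i}(c)$ is $R_{i+1}$-fixed, applying $\beta$ to the inductive identity $\Br_{R_i}(c)e_i=e_i$ gives $\Br_{R_{i+1}}(c)\,\beta(e_i)=\beta(e_i)$, and multiplying on the right by $e_{i+1}$ and using the defining relation $\beta(e_i)e_{i+1}=e_{i+1}$ of the containment yields $\Br_{R_{i+1}}(c)e_{i+1}=e_{i+1}$. This half uses neither the centrality of $c$ nor the primitivity hypothesis on $e$.

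For the converse I would induct on $|Q|$, assuming $\Br_Q(c)e=e$. If $Q=1$, then $e$ is a block of $A=A(1)$ with $ce=e$, so $(1,c)\trianglelefteq(1,e)$ and $(Q,e)$ is an $(A,c,G)$-Brauer pair. If $Q\neq1$, pick a maximal normal subgroup $R\trianglelefteq Q$, with $\beta$ as above for this pair; the structure theory of generalized Brauer pairs (\cite{BrPu}, \cite[Part IV]{AsKeOl}) singles out a $Q$-stable block $f$ of $A(R)$ with $\beta(f)e\neq0$, and a descent of the type carried out in Section \ref{sec3} shows that $f$ is primitive in $(A(R))^{C_G(R,f)}$, so that $(R,f)$ is a legitimate $(A,c,G)$-Brauer pair and, by the dichotomy of the first paragraph, $\beta(f)e=e$. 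One then obtains $\Br_R(c)f=f$: otherwise $\Br_R(c)f=0$, whence $\Br_Q(c)\beta(f)=\beta(\Br_R(c)f)=0$ and therefore $e=\Br_Q(c)e=\Br_Q(c)\beta(f)e=0$, a contradiction. Now $\Br_R(c)f=f$ lets the inductive hypothesis produce a chain from $(1,c)$ to $(R,f)$, while $\beta(f)e=e$ and the $Q$-stability of $e$ give the normal containment $(R,f)\trianglelefteq(Q,e)$; concatenating shows $(Q,e)$ is an $(A,c,G)$-Brauer pair. Independence from the choices of $R$ and of the chains is part of the cited theory.

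The step I expect to be the main obstacle is the production of $f$ with the correct primitivity in $(A(R))^{C_G(R,f)}$, which is precisely where the non-centrality of $c$ bites: classically one simply splits the central idempotent $\Br_R(c)$ into blocks of $A(R)$ to locate $f$, whereas here $\Br_R(c)$ need not be central, and $f$ must instead be extracted from the interplay of $\beta$, the primitivity of $e$, and Lemma \ref{Lemma71} — essentially the purpose of the extended result of Section \ref{sec3}. Everything else is routine bookkeeping with Brauer homomorphisms.
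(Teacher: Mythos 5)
Your first paragraph is, in substance, the paper's entire proof, and it is correct: in this paper an $(A,c,G)$-Brauer pair is defined \`a la Brou\'e--Puig by the single condition $\Br_Q(c)e\neq 0$ (this is the definition recalled at the start of Section \ref{sec3}, following \cite{BrPu}), so the proposition amounts exactly to the dichotomy $\Br_Q(c)e\in\{0,e\}$. You obtain this just as the paper does, from $\Br_Q(A^G)=A(Q)^{N_G(Q)}\subseteq (A(Q))^{C_G(Q,e)}$ together with Lemma \ref{Lemma71} applied in that fixed-point algebra. (Two small points: the inclusion $Z(A(Q))\subseteq Z\bigl((A(Q))^{C_G(Q,e)}\bigr)$ you write is false in general; what you need, and what is true, is that $e$ itself lies in $Z(A(Q))\cap (A(Q))^{C_G(Q,e)}\subseteq Z\bigl((A(Q))^{C_G(Q,e)}\bigr)$ because $C_G(Q,e)$ stabilizes $e$ by definition. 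Also, the $b$ in the statement is a typo for $c$.)

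The remaining three paragraphs are aimed at a different, chain-based notion of ``Brauer pair'' that the proposition does not use, so they are not needed; and, taken on their own terms, they contain a genuine gap exactly where you predict one. In the converse direction you must produce, for a maximal normal subgroup $R\trianglelefteq Q$, a $Q$-stable block $f$ of $A(R)$ that is primitive in $(A(R))^{C_G(R,f)}$; but this primitivity is not derivable from the hypotheses of the proposition. In the framework of \cite{KeKuMi} and \cite[Part IV]{AsKeOl} it is an assumption imposed on the triple $(A,c,G)$ (that is what makes the triple ``saturated''), and Proposition \ref{prop23} of the paper likewise assumes it for the smaller pair rather than proving it. So the induction cannot be closed as written, and the appeal to ``a descent of the type carried out in Section \ref{sec3}'' does not supply the missing primitivity. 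The fix is simply to delete the induction: under the definition actually in force, your first paragraph already proves both implications.
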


\begin{proof}
If $(Q,e)$ is an $(A,c,G)$-Brauer pair then $\Br_Q^A(c)e\neq 0.$ The $N_G(Q)$-algebra surjective homomorphism $\Br_Q^A:A^Q\to A(Q)$ gives $\Br_Q^A(A^G)\subseteq A(Q)^{N_G(Q)},$ hence $\Br_Q^A(c)\in (A(Q))^{C_G(Q,e)}.$ At this point Lemma \ref{Lemma71}, a) establishes the result.
\end{proof}
In conclusion, in our opinion, we can \textit{reformulate} the definition of a saturated triple by eliminating condition (i) of  \cite[Theorem 1.6]{KeKuMi}. 
The above proposition  is also necessary   for other results in the sequel of this paper.

\section{Generalized Brauer pairs and $p$-permutation $N$-interior $G$-algebras}\label{sec3}
In this section  $A$ is an  $N$-interior $p$-permutation $G$-algebra, where $N\trianglelefteq G$ and  $c$ is a primitive idempotent of $A^G.$

Note  that, if $(Q,f_Q)$ is an $(A,c,G)$-Brauer pair, since $f_Q$ is a block idempotent of $A(Q)$ and $A(Q)$ in a $C_N(Q)$-interior algebra, we have $f_Q\in A(Q)^{C_N(Q)}$ and    that $\Br_Q^A(c)f_Q$ is an idempotent of $A(Q)^{C_N(Q)}.$ 
The next proposition is an extension of of \cite[Lemma 8.6.4]{Link} to the case of   $N$-interior  $p$-permutation $G$-algebras. Although the proof is similar to that of \cite[Lemma 3.4]{KeKuMi}, for completeness we present it here.
\begin{prop}\label{prop23}
Let $(Q,f_Q)$ denote an $(A,c,G)$-Brauer pair verifying that $f_Q$ is primitive in $A(Q)^{C_N(Q)}$. Consider a subgroup $H$ of $G$ with  $C_N(Q)\leq H\leq N_G(Q, f_Q)$,  and a $p$-subgroup $S$ of $G$ such that $Q\leq S\leq H.$ Also let $f_S$ be a block of $A(S)$. The following are equivalent:
\begin{itemize}
    \item[i)] $(S,f_S)$ is an $(A,c,G)$- Brauer pair with $(Q,f_Q)\leq(S,f_S)$;
    \item[ii)] $(S,f_S)$ is an $(A(Q),f_Q,H)$-Brauer pair.
\end{itemize}

\end{prop}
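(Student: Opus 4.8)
The plan is to unwind both conditions to statements about Brauer quotients and then reconcile them using the standard compatibility between iterated Brauer constructions. Recall that since $Q \leq S$, there is (for the underlying $p$-permutation $k$-algebra) a canonical surjective algebra map relating $A(S)$ and the Brauer quotient of $A(Q)$ at $S/Q$-relevant data; more precisely, $A(Q)(S) \cong A(S)$ as $k$-algebras compatibly with the relevant group actions, and under this identification $\Br^{A(Q)}_S \circ \Br^A_Q = \Br^A_S$ on $A^S$. This is the technical backbone: it lets us translate "$\Br_S^A(c) f_S \neq 0$" into a statement inside $A(Q)$.

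First I would establish (i) $\Rightarrow$ (ii). Assume $(S,f_S)$ is an $(A,c,G)$-Brauer pair with $(Q,f_Q) \leq (S,f_S)$. The containment $(Q,f_Q) \leq (S,f_S)$ means, by definition, that $\Br_S(f_Q) f_S = f_S$ (equivalently $f_S$ appears in the image of $f_Q$ under the map $A(Q)^S \to A(S)$). Using the identification $A(Q)(S) \cong A(S)$ and the factorization $\Br^{A(Q)}_S \circ \Br^A_Q = \Br^A_S$, I can compute $\Br^{A(Q)}_S(\Br^A_Q(c)\, f_Q) = \Br^A_S(c)\, \Br_S(f_Q)$, and then multiplying by $f_S$ and using $\Br_S(f_Q) f_S = f_S$ together with $\Br^A_S(c) f_S = f_S$ (which holds because $(S,f_S)$ is an $(A,c,G)$-Brauer pair and $f_S$ is a block of $A(S)$, so $\Br_S(c)f_S \ne 0$ forces $\Br_S(c) f_S = f_S$ by primitivity/centrality of blocks) yields $\Br^{A(Q)}_S(\Br^A_Q(c) f_Q)\, f_S = f_S \neq 0$. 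Since $\Br^A_Q(c) f_Q$ is precisely the idempotent playing the role of "$c$" for the $H$-algebra $A(Q)$, and $f_S$ is a block of $A(Q)(S) \cong A(S)$, this is exactly the assertion that $(S, f_S)$ is an $(A(Q), f_Q, H)$-Brauer pair. One must check the group-theoretic hypotheses are available: the action of $H$ on $A(Q)$ is defined because $H \leq N_G(Q,f_Q)$ (so $H$ stabilizes $Q$ and the block $f_Q$), and $Q \leq S \leq H$ ensures $S$ is an admissible $p$-subgroup for the $H$-algebra $A(Q)$; the condition $C_N(Q) \leq H$ is what makes $\Br^A_Q(c) f_Q$ live in the appropriate fixed subalgebra.

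For (ii) $\Rightarrow$ (i) I would essentially reverse this computation. Assuming $\Br^{A(Q)}_S(\Br^A_Q(c) f_Q)\, f_S \neq 0$, the factorization identity gives $\Br^A_S(c)\, \Br_S(f_Q)\, f_S \neq 0$, whence both $\Br^A_S(c) f_S \neq 0$ (so $(S,f_S)$ is an $(A,c,G)$-Brauer pair) and $\Br_S(f_Q) f_S \neq 0$; the latter, since $\Br_S(f_Q)$ is an idempotent and $f_S$ a block, forces $\Br_S(f_Q) f_S = f_S$, i.e. $(Q,f_Q) \leq (S,f_S)$. The main obstacle I anticipate is not any single deep step but rather the careful bookkeeping of \emph{which} fixed-point subalgebra each idempotent lives in and verifying that the Brauer construction $A(Q)(S)$ carries the $H$-action making it identifiable with $A(S)$ as an $H/\!\!\sim$-algebra in a way compatible with all the relevant centralizers — in particular checking that $f_S$ being primitive in the right fixed subalgebra on one side corresponds to primitivity on the other is where the hypothesis "$f_Q$ primitive in $A(Q)^{C_N(Q)}$" and the $N$-interior structure get used, mirroring the proofs of \cite[Lemma 8.6.4]{Link} and \cite[IV, Lemma 3.5]{KeKuMi} that the statement is modeled on.
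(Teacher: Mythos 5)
Your proposal is correct and follows essentially the same route as the paper: both arguments rest on $\Br_Q(c)f_Q=f_Q$ (via the primitivity hypothesis and Proposition \ref{prop22}), the factorization $\Br_S^{A(Q)}\circ\Br_Q=\Br_S$ on $A^S$, and the resulting idempotent computation. The one caveat is that the equivalence ``$(Q,f_Q)\leq(S,f_S)$ iff $f_Q$ is $S$-stable and $\Br_S(f_Q)f_S=f_S$'', which you treat as definitional, is really the uniqueness statement of \cite[Theorem 1.8]{BrPu} (the paper invokes it explicitly, deducing $f_Q=f_Q'$ by orthogonality of distinct blocks), and your intermediate claim $\Br_S(c)f_S=f_S$ is both unneeded and harder to justify than the $\neq 0$ version that actually suffices.
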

\begin{proof}
First we prove "$i)\Rightarrow ii)$". Since $A(Q)^H\subseteq A(Q)^{C_N(Q)}$ it follows that $f_Q$ remains a primitive idempotent in $A(Q)^H$. In \cite[Theorem 2.1]{KeKuMi} it is stated that $(Q,f_Q)$ is the unique Brauer pair included in $(S,f_S)$ and that $f_Q$ is invariant with respect to the $S$-action. That is $\Br_S^{A(Q)}(f_Q)f_S=f_S$.

Conversely for "$ii)\Rightarrow i)$", by our assumptions we have that $\Br_Q^A (c)f_Q=f_Q$ and that $\Br_S (f_Q) f_S =f_S$, see Proposition \ref{prop22}. Indeed, $\Br_S^{A(Q)}:A(Q)^S\rightarrow A(S)$ is a surjective homomorphism  of $k$-algebras, hence $\Br_S^{A(Q)}(Z(A(Q)^S))\subseteq Z(A(S)).$

The commutativity of the diagram
$$\begin{xy} \xymatrix{ A^S\ar@{->}[rr]^{\Br_Q^A}\ar@{->}[dr]_{\Br_S^A} &&A(Q)^S\ar@{->}[dl]^{\Br_S^{A(Q)}}\\ &A(S)    } \end{xy}$$
gives
$$\Br_S^A(c)f_S=\Br_S^{A(Q)}(\Br_Q^A(c))\cdot \Br_S^{A(Q)}(f_Q)f_S  = \Br_S^{A(Q)}(\Br_Q^A(c)\cdot f_Q)f_S$$$$ = \Br_S^{A(Q)}(f_Q)f_S  = f_S$$
This shows that $(S,f_S)$ is an $(A,c,G)$-Brauer pair.

We apply  \cite[Theorem 2.1]{KeKuMi} again to obtain an $S$-invariant block of $A(Q)$, say $f_Q'$, such that $(Q, f_Q')\leq (S,f_S)$ and $\Br_S^{A(Q)}(f_Q')f_S = f_S$. If $f_Q\neq f_Q'$ then 
$$f_S =\Br_S^{A(Q)}(f_Q)f_S = \Br_S^{A(Q)}(f_Q)\Br_S^{A(Q)}(f_Q')f_S=0,$$ a contradiction.
\end {proof}

%ss%%%%%%%%%%%%%%%%%%%%%%%%%%%%%%%%%%%%%%%%%%%%%%%%%%%%%%%%%%%%%%%%%%%%%%%%%%%%%%%%%%%%%%%%%%%%
\section{Isomorphisms of categories of Brauer pairs} \label{sec4}
In this section we continue to assume that $A$ is a $p$-permutation $N$-interior $G$-algebra, where $N\trianglelefteq G$. Let $c$ be primitive idempotent of $A^N$ such that $c\in A^G$. In particular $c$ remains a primitive idempotent of $A^G$.
For shortness, we say that a $p$-subgroup $Q$ of $N$  is a defect group of $c$ in $N$ if $Q$ is a defect group of  $N_{\{c\}}$, considered as  a pointed group on the $G$-algebra $cAc$, see \cite[Proposition 18.5]{Th}. 
 Similarly we say that a $p$-subgroup $P$ of $G$ is defect group of $c$ in $G$.
%In this section we assume that $c$ is a $G$-invariant primitive idempotent of $A^N$ and that 
%$Q$ is a defect group of $c$ in $N$.

For any defect group $Q$ of $c$ in $N$ there is a defect group $P$ of $c$ in $G$ such that $Q=P\cap N$. This claim is well known but, for consistency,  we give the justifying details  in the next lines. Indeed, if $Q$ is a defect group in $N$ of $c$, then $c\in A_Q^N$ and $Q$ is minimal with this property. Similarly, if $R$ is a defect group of $c$ in $G$ then $R$ is minimal with the property $c\in A_R^G$. Given the decomposition:
$$c= \underset{x\in [N\setminus {G/R}]}{\Sum} \Tr_{N\cap~^xR  }^N(~^xa),$$ where $a\in A^R$ (see for details \cite[Proposition 11.4]{Th}),
we obtain $c\in A^N_{N\cap~^xR}$, for some $x\in G$. Since $\Br_R^A(c)\neq 0$ it follows $\Br_{~^xR}^A(c)\neq 0$, hence $\Br_{N\cap~^xR}^A(c)\neq 0$. So, there is $y\in N$ with $~^yQ=N\cap~^xR$. It follows $Q=N\cap~^zR,\ z=y^{-1}x$. We set
$P =~^zR$ and the claim is proved.
\begin{nim}\label{rem32}
For the rest of this section we fix a $(A,c,G)$-Brauer pair $(Q,f_Q)$ such that $Q$ is a defect group of $c$ in $N.$ By  the above discussion we choose a maximal $(A,c,G)$-Brauer pair $(P,f_P)$ such that
$(Q,f_Q)\leq (P,f_P)$. We assume that  $f_Q$ remains a primitive  idempotent of $A(Q)^{C_N(Q)}$ and denote by $H$ the group $N_G(Q,f_Q)$.
\end{nim}
\begin{defn}\label{definition33}  Define the sets:
$$F_1:=\{(R,f_R)|(R,f_R)\text{ is a }(A,c,G)\text{-Brauer pair such that}$$
$$(Q,f_Q)\leq(R,f_R)\text{ as }(A,c,G)\text{-Brauer pairs }\}$$
$$F_2:=\{(R,f_R)|(R,f_R)\text{ is a }(A(Q),f_Q,H)\text{-Brauer pair such }$$
$$\text{that }(Q,f_Q)\leq(R,f_R)\text{ as }(A(Q),f_Q, H)\text{-Brauer pairs}\}$$

\end{defn}
\begin{cor}\label{corollary34}
 Let $S$ be a $p$-subgroup of $G$ such that $Q\leq S$ and let $f_S$ be a block of $A(S)$. The following are equivalent:
\begin{itemize}
    \item[a)] $(S,f_S)\in F_1;$
    \item[b)] $(S,f_S)\in F_2.$
\end{itemize}
\end{cor}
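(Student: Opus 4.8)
The plan is to reduce Corollary \ref{corollary34} to Proposition \ref{prop23}, since the two statements are almost verbatim the same once the hypotheses are matched up. The only gap between the two is that Proposition \ref{prop23} requires an auxiliary subgroup $H$ with $C_N(Q)\leq H\leq N_G(Q,f_Q)$ and $Q\leq S\leq H$, whereas in the corollary we are handed an arbitrary $p$-subgroup $S$ of $G$ with $Q\leq S$, together with the fixed choice $H=N_G(Q,f_Q)$ coming from Definition \ref{definition33}.

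First I would observe that $C_N(Q)\leq N_G(Q,f_Q)=H$ automatically, because $C_N(Q)$ normalizes $Q$ and, since $f_Q$ is primitive in $A(Q)^{C_N(Q)}$ hence in particular $C_N(Q)$-stable, it fixes the block $f_Q$ as well; so the outer chain $C_N(Q)\leq H\leq N_G(Q,f_Q)$ holds (with equality on the right). The point that needs a small argument is that in both directions of the equivalence we may assume $Q\leq S\leq H$. In direction (a)$\Rightarrow$(b): if $(S,f_S)\in F_1$, then $(Q,f_Q)\leq(S,f_S)$, and by \cite[Theorem 1.8]{BrPu} the block $f_Q$ is the unique Brauer subpair below $(S,f_S)$ at the subgroup $Q$ and it is $S$-stable; stability of $f_Q$ under conjugation by $S$ says exactly that $S$ normalizes the pair $(Q,f_Q)$, i.e. $S\leq N_G(Q,f_Q)=H$. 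Thus $Q\leq S\leq H$ and Proposition \ref{prop23}, applied with this $H$, yields (b). In direction (b)$\Rightarrow$(a): if $(S,f_S)\in F_2$, then by definition $(S,f_S)$ is a Brauer pair for the $H$-algebra $A(Q)$, so $S$ is a $p$-subgroup of $H=N_G(Q,f_Q)$ and again $Q\leq S\leq H$; Proposition \ref{prop23} then gives that $(S,f_S)$ is an $(A,c,G)$-Brauer pair with $(Q,f_Q)\leq(S,f_S)$, that is $(S,f_S)\in F_1$.

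So the proof is essentially: check $C_N(Q)\leq H$; note that in either case the hypothesis $(Q,f_Q)\leq(S,f_S)$ (respectively the fact that $f_S$ is a block of $A(S)$ with $S\leq H$) forces $S\leq N_G(Q,f_Q)=H$; then quote Proposition \ref{prop23}. The only mild obstacle is making the ``forces $S\leq H$'' step precise, and this is exactly the content of \cite[Theorem 1.8]{BrPu} (uniqueness and $S$-stability of the contained subpair), which was already used inside the proof of Proposition \ref{prop23}; so nothing new is really needed. I expect the whole argument to be three or four lines long.
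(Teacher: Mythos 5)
Your reduction to Proposition \ref{prop23} is the right strategy, and the direction b)\,$\Rightarrow$\,a) is fine, but the direction a)\,$\Rightarrow$\,b) has a genuine gap at exactly the point you dismiss as a ``mild obstacle''. The uniqueness-and-stability statement of \cite[Theorem 1.8]{BrPu} that you invoke concerns \emph{normal} containment of Brauer pairs: the general relation $(Q,f_Q)\leq(S,f_S)$ is the transitive closure of normal containment, and for such a general containment $S$ need not normalize $Q$ at all, so the phrase ``$f_Q$ is $S$-stable'' is not even meaningful (${}^s f_Q$ would be a block of $A({}^sQ)$, not of $A(Q)$). You cannot conclude $S\leq N_G(Q,f_Q)$ before you know $S\leq N_G(Q)$, and nothing in your argument supplies that.

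This is precisely where the standing hypothesis of Section \ref{sec4} --- that $Q$ is a defect group of $c$ in $N$ --- has to enter, and your proof never uses it; that omission is the symptom of the gap. The paper's own (one-line) proof consists essentially of this observation, via Remark \ref{rem32}: since $(S,f_S)$ is an $(A,c,G)$-Brauer pair, $\Br_S(c)\neq 0$, hence $\Br_{S\cap N}(c)\neq 0$; as $c\in A^N_Q$, this forces $S\cap N\leq {}^yQ$ for some $y\in N$, and together with $Q\leq S\cap N$ this gives $Q=S\cap N$, which is normal in $S$ because $N\trianglelefteq G$. Once $Q\trianglelefteq S$ is established, the uniqueness and $S$-stability from \cite[Theorem 1.8]{BrPu} do give $S\leq N_G(Q,f_Q)=H$ as you say, and Proposition \ref{prop23} finishes the argument. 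So your outline is repairable, but the missing normality step is the actual content of the corollary's proof rather than a formality.
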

\begin{proof}
We first show that $Q\trianglelefteq S$. If b) holds then $S\leq H$ and since $H=N_G(Q,f_Q)$ it follows that $Q$ is normal in $S$. If a) holds, there is $x\in G$ such that
$$(Q,f_Q)\leq (S,f_S)\leq ~^x(P,f_P),$$
hence $Q\leq S\leq ~^xP$. We have $Q=P\cap N,$ thus
$Q\leq~^xP\cap N=~^xQ.$
It follows that $x\in N_G(Q)$ and then  $Q\leq ~^{x^{-1}}S\leq P$. Since $Q$ is normal in $P$ it follows that $~^{x^{-1}}Q$ is normal in $S$, thus $Q\trianglelefteq S$. 

The rest of the proof follows from Proposition \ref{prop23}.
\end{proof}
\begin{prop}
\label{Proposition35}
Let $S,T$ be two $p$-subgroups of $G$ such that $S\leq T$ and let $f_S,f_T$ denote blocks of $A(S),$ $A(T)$, respectively.

The following are equivalent:
\begin{itemize}
    \item[1)] $(S,f_S)\in F_1$ such that $(S,f_S)\leq(T,f_T)$;
    \item[2)] $(S,f_S)\in F_2$ such that $(S,f_S)\leq(T,f_T)$.
\end{itemize}
\end{prop}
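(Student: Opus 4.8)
The plan is to unwind both inclusion relations to statements purely about Brauer morphisms and then use the compatibility already established in Proposition~\ref{prop23} and Corollary~\ref{corollary34}. First I would recall what each inclusion means. For $F_1$, the relation $(S,f_S)\leq(T,f_T)$ of $(A,c,G)$-Brauer pairs means, by the standard theory of \cite[Theorem 1.8]{BrPu}, that $S\leq T$, that $S\trianglelefteq $ is not required but rather that $S$ is normal in the relevant sense inside $T$ up to the containment of pairs, and concretely that there is an inclusion of the pairs witnessed by $\Br_S(f_T)f_S=f_S$ together with $f_S$ being the unique $S$-invariant block of $A(S)$ below $f_T$; equivalently one characterises $\leq$ by saying $f_S$ appears with $\Br_S(f_T)f_S=f_S$. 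For $F_2$, the relation $(S,f_S)\leq(T,f_T)$ of $(A(Q),f_Q,H)$-Brauer pairs means the analogous thing inside the $p$-permutation $H$-algebra $A(Q)$: namely $S\leq T$ and $\Br_S(f_T)f_S=f_S$, where now the Brauer construction is the one internal to $A(Q)$.

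The key observation is that these two Brauer constructions are compatible: for a $p$-subgroup $S$ with $Q\leq S\leq H$ one has a canonical identification $A(Q)(S/Q)\cong A(S)$ as algebras, and under this identification the Brauer morphism $\Br^{A(Q)}_{S/Q}\colon A(Q)^{S/Q}\to A(Q)(S/Q)$ corresponds to $\Br^A_S\colon A^S\to A(S)$ composed appropriately, so that $\Br_S(f_T)$ computed in $A(Q)$ and $\Br_S(f_T)$ computed in $A$ agree. This is exactly the commutative triangle already used in the proof of Proposition~\ref{prop23}. So I would first record this compatibility as the technical heart of the argument, then simply transport the defining condition $\Br_S(f_T)f_S=f_S$ from one side to the other.

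Concretely the steps are: (1) By Corollary~\ref{corollary34}, membership in $F_1$ and in $F_2$ coincide, so the underlying pairs $(S,f_S),(T,f_T)$ are the same objects on both sides and we only need to compare the orderings. (2) Fix $(S,f_S),(T,f_T)\in F_1=F_2$. Since both contain $(Q,f_Q)$ we have $Q\trianglelefteq S$ and $Q\trianglelefteq T$ (Remark~\ref{rem32}/Corollary~\ref{corollary34}), and if $(S,f_S)\leq(T,f_T)$ in either sense then in particular $S\leq T$, so we may assume $Q\leq S\leq T\leq H$. (3) Write out the $F_1$-inclusion: it is equivalent, by \cite[Theorem 1.8]{BrPu} applied to the $G$-algebra $A$, to $f_S$ being the unique $S$-stable block of $A(S)$ with $\Br_S(f_T)f_S=f_S$. (4) Write out the $F_2$-inclusion: by \cite[Theorem 1.8]{BrPu} applied now to the $H$-algebra $A(Q)$ and its $p$-subgroups $S/Q\leq T/Q$, it is equivalent to $f_S$ being the unique $S/Q$-stable block of $A(Q)(S/Q)\cong A(S)$ with $\Br_{S/Q}(f_T)f_S=f_S$. (5) Invoke the compatibility of the two Brauer constructions from the paragraph above to see that $\Br_S(f_T)f_S=f_S$ (in $A$) holds if and only if $\Br_{S/Q}(f_T)f_S=f_S$ (in $A(Q)$), and likewise that $S$-stability of $f_S$ matches $S/Q$-stability since $Q$ acts trivially on $A(Q)$. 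This gives the equivalence of 1) and 2).

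The main obstacle I expect is step (5), making the identification $A(Q)(S/Q)\cong A(S)$ and the matching of Brauer maps genuinely canonical and compatible with the $H$- and $G$-actions, rather than merely plausible; one has to check that the relevant idempotents $\Br_Q(c)$ and $f_Q$ do not interfere, i.e. that passing from $A$ to the block-like summand carved out by $f_Q$ inside $A(Q)$ is what lets the two orderings literally coincide. This is, however, precisely the bookkeeping already done in the proofs of Propositions~\ref{prop22} and \ref{prop23} via the commutative triangle of Brauer morphisms, so the proof should be short: essentially ``both conditions unwind, via \cite[Theorem 1.8]{BrPu}, to the single equation $\Br_S(f_T)f_S=f_S$, and Proposition~\ref{prop23} guarantees this equation has the same meaning on both sides.''
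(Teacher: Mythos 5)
Your steps (3) and (4) mischaracterize the containment relation $\leq$ for Brauer pairs, and this is a genuine gap rather than a cosmetic one. \cite[Theorem 1.8]{BrPu} characterizes only the \emph{normal} containment $(S,f_S)\trianglelefteq(T,f_T)$: when $S\trianglelefteq T$ and $f_S$ is $T$-stable, the defining equation is $\Br_T(f_S)f_T=f_T$ (note also that your formula $\Br_S(f_T)f_S=f_S$ has $S$ and $T$ interchanged and does not typecheck --- there is no Brauer map carrying $f_T\in A(T)$ down to $A(S)$ for $S\leq T$; compare how the paper uses this equation in the proof of Proposition~\ref{prop23}, where the smaller group's block is the argument of $\Br$). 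For a general inclusion $S\leq T$, the relation $(S,f_S)\leq(T,f_T)$ is the transitive closure of $\trianglelefteq$ and is not captured by a single such equation, so the plan of ``unwinding both sides to one equation and transporting it'' does not get off the ground. In particular $f_S$ need not even lie in $A(S)^T$ when $S$ is not normal in $T$.

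The paper's proof instead rests on \cite[Corollary 1.9]{BrPu}, which does characterize general containment: $(S,f_S)\leq(T,f_T)$ if and only if a primitive idempotent $i\in A^T$ with $\Br_T(i)f_T\neq 0$ satisfies $\Br_S(i)f_S\neq 0$. One then lifts idempotents along $A^T\to A(Q)^T\to A(T)$, checks that $\Br_Q(i)$ stays primitive in $A(Q)^T$ (and, for the converse, that a primitive lift $j$ in $A(Q)^T$ is conjugate to some $\Br_Q(i)$), and uses the commutative triangle of Brauer morphisms to move the criterion between $A$ and $A(Q)$. Your compatibility observation in step (5) is the right technical ingredient, but it must be fed into the idempotent criterion of Corollary 1.9 rather than into Theorem 1.8. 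Alternatively, your approach could be repaired by first proving the equivalence for normal containments (where the Theorem 1.8 characterization, correctly stated, does transport along the identification of $A(Q)(T)$ with $A(T)$) and then verifying that every chain of normal containments joining two pairs of $F_1$ stays inside $F_1$ (and similarly for $F_2$), so that the two transitive closures coincide; that verification is absent from your write-up.
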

\begin{proof} According to Corollary \ref{corollary34}, any pair $(S,f_S)$ is in $F_1$ in and only if it is in $F_2,$ hence we only need to prove  that $(S,f_S)\leq(T,f_T)$ in $F_1$ if and only if $(S,f_S)\leq(T,f_T)$ in $F_2.$ 
Let $e$ denote a primitive idempotent  appearing in a pairwise orthogonal  decomposition of $f_T$ into primitive idempotents in $A(T).$ By lifting idempotents, there exists a primitive idempotent $i\in A^T$  such that $e=\Br_T^A(i)$, hence $\Br_T^A(i)f_T=\Br_T^A(i)$. By our assumption we also have $\Br_S^A(i)f_S\neq 0$. Since $i$ is  primitive in $A^T$, we obtain that $j:=\Br_Q^A(i)$ is also a primitive idempotent of $A(Q)^T$. We have
$$0\neq \Br_T^A(i)f_T=\Br_T^{A(Q)}(j)f_T.$$ %since $ S\leq T, \Br_S(i)\neq 0$.
Similarly  we obtain $$0\neq \Br_S^A(i)f_S=\Br_S^{A(Q)}(\Br_Q^A(i))f_S.$$
This proves that $(S,f_S)\leq (T,f_T)$ as elements of $F_2,$ according to \cite[Corollary 1.9]{BrPu}.

Conversely, considering a similar decomposition of $f_T$ in $A(T)$, we find a primitive idempotent $e$ with $f_Te=f_T$. Let $j$ be such that $\Br_T^{A(Q)}(j)=e$ and let $i$ be such that $\Br_T(i)=e$. Then we may assume $i$ and $j$ to be primitive idempotents in $A^T,A(Q)^T$, respectively. Further we get $\Br_Q^A(i)=~^aj,$ for some $a\in(A(Q)^T)^*.$
We obtain:
$$\Br_T^A(i)f_T = \Br_T^{A(Q)}(~^aj)f_T = f_T\neq 0.$$
Again $\Br_T^A(i)\neq 0$ implies $\Br_S^A(i)\neq 0$ and then
$$0\neq \Br_S^{A(Q)}(~^aj)f_S=\Br_S^A(i)f_S.$$
This concludes the proof.
\end{proof}
\begin{cor}
\label{corollary36} Let  $(P,f_P)$ be an $(A,c,G)$-Brauer pair such that $(Q,f_Q)\leq (P,f_P)$. Then $(P,f_P)$ is a maximal $(A,c,G)$-Brauer pair if and only if it is a maximal $(A(Q),f_Q,H)$-Brauer pair, where $H=N_G(Q,f_Q).$
\end{cor}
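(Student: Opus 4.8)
The plan is to deduce Corollary \ref{corollary36} from the already-established equivalences, primarily Corollary \ref{corollary34} and Proposition \ref{Proposition35}. The key observation is that maximality of a Brauer pair is an intrinsic order-theoretic notion: $(U,f_U)$ is maximal in a poset of Brauer pairs precisely when it is a maximal element of that poset. Since Corollary \ref{corollary34} shows that $F_1$ and $F_2$ have the same underlying set (the membership conditions (a) and (b) are equivalent for any $p$-subgroup $S\geq Q$ with a block $f_S$), and Proposition \ref{Proposition35} shows the containment relations agree on this common set, the two posets $F_1$ and $F_2$ are literally identical. Hence an element is maximal in one if and only if it is maximal in the other.

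First I would note that, by Remark \ref{rem32}, any $(A,c,G)$-Brauer pair $(U,f_U)$ with $(Q,f_Q)\leq (U,f_U)$ satisfies $Q\trianglelefteq U$ and $f_Q$ stays primitive in $A(Q)^{C_N(Q)}$ in the relevant sense, so the hypotheses of Corollary \ref{corollary34} and Proposition \ref{Proposition35} are met; thus $(U,f_U)\in F_1$. Next, I would invoke Corollary \ref{corollary34} to conclude $(U,f_U)\in F_2$ as well, and conversely. Then, for the maximality statement, suppose $(U,f_U)$ is maximal in $F_1$; if it were not maximal in $F_2$, there would be $(T,f_T)\in F_2$ with $(U,f_U) < (T,f_T)$ as elements of $F_2$. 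By Corollary \ref{corollary34}, $(T,f_T)\in F_1$, and by Proposition \ref{Proposition35} the strict containment persists in $F_1$, contradicting maximality there. The reverse implication is symmetric.

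Finally, there is a small subtlety to address: the statement speaks of "maximal $(A,c,G)$-Brauer pair" without qualification, whereas $F_1$ consists of Brauer pairs containing $(Q,f_Q)$. One must observe that a maximal $(A,c,G)$-Brauer pair containing $(Q,f_Q)$ is the same as a maximal $(A,c,G)$-Brauer pair once we know $(Q,f_Q)$ sits inside a maximal one (which it does, e.g. by Remark \ref{rem32}), together with the standard fact that any two maximal $(A,c,G)$-Brauer pairs are $N$-conjugate, so being maximal among those above $(Q,f_Q)$ forces genuine maximality. I expect this bookkeeping — making precise that "maximal in $F_1$" coincides with "maximal as an $(A,c,G)$-Brauer pair" — to be the only real point requiring care; the core of the argument is the immediate transfer of the order structure through Corollary \ref{corollary34} and Proposition \ref{Proposition35}, which is essentially formal.
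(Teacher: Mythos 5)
Your proposal is correct and matches the paper's intent: the corollary is stated without proof precisely because it is the immediate combination of Corollary \ref{corollary34} (the underlying sets of $F_1$ and $F_2$ coincide) and Proposition \ref{Proposition35} (the order relations coincide), which is exactly your argument. Your final bookkeeping point is fine, though it follows even more directly from transitivity of the containment order: any pair containing $(U,f_U)$ automatically contains $(Q,f_Q)$ and hence already lies in $F_1$, so no appeal to conjugacy of maximal pairs is needed.
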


Let $(U,f_U)$ be a maximal $(A,c,G)$-Brauer pair. Then $\mathcal{F}_{(U,f_U)}(A,c,G)$ is a finite category, called fusion system,  introduced in \cite[Definition 2.3]{KeKuMi}. It has as objects the $(A,c,G)$-Brauer pairs $(R,f_R)$ such that  $(R,f_R)\leq (U,f_U)$ and the morphisms are described by
%We will use the same notations for the homomorphisms in  $\mathcal{F}_{(U,f_U)}(A,c,G)$. If $S,T\leq U$  then
$$\mathrm{Hom}_{\mathcal{F}_{(U,f_U)}(A,c,G)}(S,T)=\{c_x: S\rightarrow T| x\in G,\  ~^x(S,f_S)\leq (T,f_T) \},$$
with $c_x(u)=xux^{-1}$ for any $u\in S$ and $S,T\leq U$. Similarly, we introduce the fusion system $\mathcal{F}_{(U,f_U)}(A(Q),f_Q,H).$ 

\begin{prop}
\label{proposition38}
Let  $(P,f_P)$ be a maximal  $(A,c,G)$-Brauer pair such that $(Q,f_Q)\leq (P,f_P)$. If $(S,f_S)\in F_1$ and $(T,f_T)$ is a Brauer pair belonging to $\mathcal{F}_{(P,f_P)}(A,c,G)$ then 
$$\mathrm{Hom}_{\mathcal{F}_{(P,f_P)}(A,c,G)}(S,T)= \mathrm{Hom}_{\mathcal{F}_{(P,f_P)}(A(Q), f_Q,H)}(S,T).$$
\end{prop}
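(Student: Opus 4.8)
The plan is to show that an element $c_x\in\mathrm{Hom}_{\mathcal{F}_{(U,f_U)}(A,c,G)}(S,T)$, i.e.\ an element $x\in G$ with ${}^x(S,f_S)\leq (T,f_T)$ (as $(A,c,G)$-Brauer pairs), also defines an element of $\mathrm{Hom}_{\mathcal{F}_{(U,f_U)}(A(Q),f_Q,H)}(S,T)$, i.e.\ that ${}^x(S,f_S)\leq (T,f_T)$ holds as $(A(Q),f_Q,H)$-Brauer pairs as well. First I would record that, since $(S,f_S)\in F_1$ and $(Q,f_Q)$ is normal in $S$ with $f_Q$ primitive in $A(Q)^{C_N(Q)}$, Corollary \ref{corollary34} applies to give $(S,f_S)\in F_2$; in particular $(S,f_S)$ is a genuine $(A(Q),f_Q,H)$-Brauer pair, so the statement at least makes sense. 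The same must be arranged for $(T,f_T)$: because $(S,f_S)\leq (T,f_T)$ in $\mathcal{F}_{(U,f_U)}(A,c,G)$ forces $(Q,f_Q)\leq (T,f_T)$ by transitivity of containment, $(T,f_T)\in F_1$ and hence $(T,f_T)\in F_2$ by Corollary \ref{corollary34} again.

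Next I would transport the relation ${}^x(S,f_S)\leq(T,f_T)$ through the $G$-action. Conjugating by $x$, the pair ${}^x(S,f_S)=({}^xS,{}^xf_S)$ is an $(A,c,G)$-Brauer pair (here $c$ is $G$-invariant, so this is automatic), it contains ${}^x(Q,f_Q)$, and ${}^x(Q,f_Q)\leq (T,f_T)$. But inside a maximal Brauer pair there is a \emph{unique} Brauer subpair on a given subgroup contained in $(T,f_T)$; applying \cite[Theorem 1.8]{BrPu} to the $p$-subgroup $Q$ and to ${}^xQ$ I get that the $Q$-part of ${}^x(S,f_S)$ below $(T,f_T)$ is $(Q,f_Q)$ itself — equivalently, after replacing $x$ by an element of $N_G(Q)$-translate if necessary, I may normalise so that ${}^xQ=Q$ and ${}^xf_Q=f_Q$, i.e.\ $x\in H=N_G(Q,f_Q)$; this is exactly the point where membership in $H$ (rather than all of $G$) enters, and it is the step I expect to be the main obstacle to state cleanly, since one has to argue that the morphism $c_x$ can be represented by an element normalising $(Q,f_Q)$, using uniqueness of the subpair on $Q$ inside both ${}^x(S,f_S)$ and $(T,f_T)$ and the fact that $(Q,f_Q)\leq{}^x(S,f_S)$ (from $(Q,f_Q)\leq(S,f_S)$, since $c_x$ fixes $(Q,f_Q)$ as it lies in the category $\mathcal{F}_{(U,f_U)}(A,c,G)$ with source containing $Q$).

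With $x\in H$ in hand, the relation ${}^x(S,f_S)\leq(T,f_T)$ is now a containment of $(A,c,G)$-Brauer pairs all lying in $F_1$ and all containing $(Q,f_Q)$, so Proposition \ref{Proposition35} translates it verbatim into ${}^x(S,f_S)\leq(T,f_T)$ as elements of $F_2$, i.e.\ as $(A(Q),f_Q,H)$-Brauer pairs. Hence $c_x\in\mathrm{Hom}_{\mathcal{F}_{(U,f_U)}(A(Q),f_Q,H)}(S,T)$, which is the desired inclusion of Hom-sets. The only care needed is bookkeeping: checking that the normalisation of the representative $x$ does not change the morphism $c_x\colon S\to T$ (it changes $x$ by left multiplication by an element of $C_G(S)$, or more precisely by an element centralising the image, which does not affect the induced conjugation map), and that all the pairs produced along the way indeed sit inside the fixed maximal pair $(U,f_U)$ so that they are objects of the two fusion categories in question — both of which follow from Corollary \ref{corollary36} and the definitions of $\mathcal{C}$ and $\mathcal{D}$.
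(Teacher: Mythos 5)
Your overall strategy is the right one and is exactly the argument the paper has in mind (the printed proof is only a pointer to Harris's Proposition~5, which runs along these lines): given $c_x$ with ${}^x(S,f_S)\leq(T,f_T)$, show $x\in H$ and then invoke Proposition~\ref{Proposition35} together with Corollary~\ref{corollary34}. Two points need tightening, and the second is the one that would actually break a literal reading of your write-up. First, your opening claim that $(T,f_T)\in F_1$ "by transitivity" is premature: the hypothesis gives ${}^x(Q,f_Q)\leq(T,f_T)$, not $(Q,f_Q)\leq(T,f_T)$; this only becomes available after you know $x\in H$. Second, you should not "replace $x$ by an $N_G(Q)$-translate": any change of representative alters the map $c_x\colon S\to T$ unless the modification centralises ${}^xS$, and your closing remark does not actually verify this. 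No replacement is needed. Since $(S,f_S)\in F_1$ one has $S\cap N=Q$, and likewise $U\cap N=Q$ because $(U,f_U)\in F_1$; hence ${}^xQ={}^x(S\cap N)={}^xS\cap N\leq T\cap N\leq U\cap N=Q$, and comparing orders gives ${}^xQ=Q$. Then ${}^x(Q,f_Q)\leq{}^x(S,f_S)\leq(T,f_T)\leq(U,f_U)$ and $(Q,f_Q)\leq(U,f_U)$, so the uniqueness statement of \cite[Theorem 1.8]{BrPu} applied inside $(U,f_U)$ forces ${}^xf_Q=f_Q$, i.e.\ $x\in H$, for the \emph{original} $x$. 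This also retroactively yields $(Q,f_Q)\leq(T,f_T)$, hence $(T,f_T)\in F_1$, and the rest of your argument via Corollary~\ref{corollary34} and Proposition~\ref{Proposition35} goes through verbatim.
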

\begin{proof}
Similar to the proof of \cite[Proposition 5]{Ha}.
\end{proof}
We define two subcategories  of the above fusion systems and, in the first main result of this paper, we show that these two categories are isomorphic.

\begin{defn}\label{defnCD}

 The full subcategory of $\mathcal{F}_{(P,f_P)}(A,c,G)$  whose objects are those of $F_1$ is denoted by $\mathcal{C}$. 
 
 The full subcategory of $\mathcal{F}_{(P,f_P)}(A(Q),f_Q,H)$  whose objects are those of $F_2$ is is denoted by $\mathcal{D}$.
 \end{defn}
The proof of the first main result is immediate by collecting the results of this section, applied to the categories given in Definition \ref{defnCD}. 
\begin{thm}
\label{theorem39} Let $A$ be an  $N$-interior $p$-permutation $G$-algebra, where $N\trianglelefteq G$ and let $c$ be primitive idempotent of $A^N$ such that $c\in A^G$. Let $(Q,f_Q)$ be an $(A,c,G)$-Brauer pair such that $Q$ is a defect group of $c$ in $N.$ We fix  a maximal $(A,c,G)$-Brauer pair $(P,f_P)$ such that
$(Q,f_Q)\leq (P,f_P)$ and we assume that  $f_Q$ is  primitive  in  $A(Q)^{C_N(Q)}.$  Then,  the categories $\mathcal{D}$ and $\mathcal{C}$ are isomorphic. 
\end{thm}

%%%%%%%%%%%%%%%%%%%%%%%%%%%%%%%%%%%%%%%%%%%%%%%%%%%%%%%%%%%%%%%%%%%%%%%%%%%%%%%%%%
\section{Covering points revisited} \label{sec5}

%\begin{nim}\label{remdefpoints} 
%??In fact we can justify that for  a point $\alpha\subseteq B^H$ we have that $D$ is a defect group of $H_{\alpha}$ if and only if for any $i\in \alpha$ the idempotent $i$ has defect group $D$.??
%\end{nim}

As in Section \ref{sec4},  $N$ is a normal subgroup of the finite group $G.$ We consider  $A,$ a $p$-permutation  $G$-algebra, and we assume that  $A$ contains  a 
$G$-invariant $k$-subalgebra $C$ such that $1_A\in C.$ 
 We recall the definition of covering points that was introduced in \cite{Co}.
 
\begin{defn}\label{defn51}\cite[Definition 3.2]{Co}
\label{definition41} Let $\alpha\subseteq A^G$ be a point and let $\beta\subseteq C^N$ be a point with  defect group $Q.$  We say that the point $\alpha$ \textit{covers} the point $\beta,$ if the following conditions are satisfied:
\begin{itemize}
    \item[$a)$] the point $\alpha$ admits a defect group $D$ such that $D\cap N=Q$;
    \item[$b)$] for any $i\in\alpha$ there is an idempotent $j_1$ of $A^N,$ such that $j_1=~^aj$ for some $j\in \beta, a\in(A^N)^*,$ 
    and there is a primitive idempotent $f\in A^N$ belonging to a point  with defect group $Q$ such that $j_1f=fj_1=f$ and $if=fi=f$.
    %$$if^a=f^ai=f^a\text{ and }jf=fj=fi\text{ for some } a\in(A^N)^*.$$
\end{itemize}
\end{defn}

The following are useful  characterizations of the above definition.
\begin{prop}\label{prop53} The point 
$\alpha\subseteq A^G$ covers the point 
$\beta\subseteq C^N,$ ($Q$ is a defect  group of $\beta$) if and only if for any idempotents $i\in\alpha, j\in \beta$ the following conditions are satisfied:
\begin{itemize}
\item[$a')$] the idempotent $i$ admits a defect group $D$ such that $D\cap N=Q$;
\item[$b')$] there is a primitive idempotent $f_1\in A^N$ with defect group $Q$ such that  
$$i\cdot ~^af_1=~^af_1\cdot i=~^a f_1\text{ and }jf_1=f_1j=f_1\text{ for some } a\in(A^N)^*.$$
\end{itemize}
\end{prop}

\begin{proof}
Assume that $\alpha$ covers $\beta$. Let $i\in\alpha, j\in \beta$. Then condition $a')$ is verified. For the chosen $i\in\alpha, j\in \beta$ there is $j_1=~^{a}j\in\beta, a_1\in(A^N)^*$ such that $j_1f=fj_1=f$ and $if=fi=f$ for some $f\in \delta$, where $\delta\subseteq A^N$ is a point of defect $Q$. We obtain
$$j\cdot ~^{a^{-1}}f=~^{a^{-1}}f\cdot j=~^{a^{-1}}f,\quad if=fi=f,$$ hence, if we denote by $f_1$ the primitive idempotent $~^{a^{-1}}f$ of $A^N$  we obtain
$$jf_1=f_1j=f,\quad i\cdot ~^af_1=~^af_1\cdot i=~^af_1. $$

Assume $a')$ and $b')$ are satisfied for some $i\in\alpha, j\in \beta,$ hence condition $a)$ of Definition \ref{definition41} is verified. By $b')$ there is a point $\delta\subseteq A^N$ with defect $Q$ and $f_1\in\delta$ such that
$$i\cdot ~^af_1=~^af_1\cdot i=~^af_1,\quad jf_1=f_1j=f_1,$$ 
for some  $a\in(A^N)^*.$ But that means
$ {~^aj}\cdot {~^af_1}={~^af_1}\cdot {~^aj}=f_1.$ We denote by $f$ the idempotent $~^af_1\in \delta.$  

Note that $~^aj$ is in the  $(A^N)^*$-conjugacy class  of $\beta$ so there is $j_1,$  a primitive idempotent of $A^N,$ such that 
$ j_1f=fj_1=f.$ 
\end{proof}

%For any $A$-conjugacy class $\beta$ of a (non-necessarily primitive) idempotent $j$ of $A,$ we denote by $m_{\beta}^{\alpha}$ the \textit{multiplicity} of $\beta$ in $\alpha,$ where $\alpha$ is a point of $A.$ Explicitly, $m_{\beta}^{\alpha}$ is the number of idempotents belonging to $\alpha$ that appear in a primitive idempotent decomposition of $j$ in $A.$ Note that $m_{\beta}^{\alpha}$ is independent of the choice of $j\in\beta,$ hence we sometimes denote it $m_j^{\alpha}.$
We introduce the next definition of covering between points, which is a particular case  of Definition \ref{defn51}. Note that this definition is also equivalent to the usual cover in case of blocks of group algebras that cover a $G$-invariant block of $kN.$
\begin{defn}\label{defn53} Let $\alpha$ denote a point of $A^G$ and  $\beta\subseteq C^N$ be a point with  defect group $Q$ such that $\beta\cap C^G\neq \emptyset.$ We say that $\alpha$ \textit{strongly covers} $\beta$ if,  for any $j\in\beta\cap C^G$ there is $i\in\alpha$ and there is a point $\epsilon\subseteq A^N$ with defect group $Q$ such that:
\begin{itemize}
\item[$a'')$] $ji=ij=i$;
\item[$b'')$] $if=fi=f,\quad jf_1=f_1j=f_1$ for some $f,f_1\in\epsilon$.
\end{itemize} 
\end{defn}

\begin{prop}\label{prop55'} Let $\alpha$ denote a point of $A^G$ and  $\beta\subseteq C^N$ be a point with  defect group $Q$ such that $\beta\cap C^G\neq \emptyset.$  The following statements are true:
\begin{itemize}
\item[(i)] if $\alpha$  strongly covers $\beta$ then $\alpha$ covers $\beta$; 
%\item[(ii)] for any $j\in\beta\cap C^G$ there is $i\in\alpha$ and there is a point $\epsilon\subseteq A^N$ with defect group $Q$ such that:
%\subitem $a'')$ $ji=ij=i$;
%\subitem $b'')$ $if=fi=f,\quad jf_1=f_1j=f_1$ for some $f,f_1\in\epsilon$.
\item[(ii)] $\alpha$  strongly covers $\beta$ if and only if for any $j\in\beta\cap C^G$ there is $i\in\alpha$ such that $i\in (jAj)^G$ and there is a point $\epsilon\subseteq (jAj)^N$ with  defect group $Q$ such that  $if=fi=f,$ for some $f\in \epsilon$.
\end{itemize}
\end{prop}
\begin{proof}
%$"(i)\Rightarrow (ii)"$

%We shall use the notation $\hat{\beta}:=\beta\cap C^G.$ With this notation $a'')$ is equivalent to $m_{\hat{\beta}}^{\alpha}\neq 0.$
%Since $\alpha$ covers $\beta$ it follows that for any $j\in\hat{\beta}$ there is $i\in\alpha$ such that $ij=ji=i.$

%Since $\alpha$ covers $\beta,$ by Proposition \ref{prop53}, $b'),$ for any $i\in\alpha, j\in \beta$  we get $m_{\i}^{\epsilon}\neq 0, m_{j}^{\epsilon}=m_1^{\epsilon}\neq 0,$ for a point $\epsilon$  of $A^N$ with defect group $Q.$

%Let $j\in\hat{\beta}$ and consider

%$$j=\sum_{l=1}^{m_1^{\delta}}f_l+\sum_{j'\in J'}j',$$
%a primitive idempotent  decomposition of $j$  in $A^N.$
%By contradiction, if $m_{\hat{\beta}}^{\alpha}=0,$ then for any $i\in\alpha$ with $ij=ji=0$ (viewed as idempotents in $A^G$) we also get  $ij=ji=0$ in $A^N.$ This forces $f_l\cdot i=i\cdot f_l=0,$ for any $l\in\{1,\ldots, m_1^{\epsilon}\}.$ So, there are no idempotents from $\epsilon$ appearing in any primitive  idempotent decomposition of $i$ in $A^N,$ see \cite[Corollary 3.10]{Pu}. Finally, \cite[Corollary 3.12]{Pu} (any $(A^N)^*$-conjugate of $i,$ say $i'$, verifies $m_{i'}^{\epsilon}=0$ and clearly $\alpha$ is included in this set) provides a contradiction with $m_{\alpha}^{\epsilon}\neq 0.$
%Statement $b'')$ is a consequence of Proposition \ref{prop53}, $b').$
\begin{itemize}
\item[(i)]
%$"(ii)\Rightarrow (i)"$ 

We assume that $\alpha$ strongly covers $\beta$ and let $i_1\in\alpha,j_1\in\beta$. By assumption we can choose $j\in\beta\cap C^G$  and $i\in \alpha$ such  that statements $a'')$ and $b'')$ of Definition \ref{defn53} hold for some point $\epsilon\subseteq A^N$ with defect group $Q.$ Thus, there exist $f,f_1\in\epsilon$  such that
$$if=fi=f,\quad jf_1=f_1j=f_1.$$
Since there is $a_1\in(A^G)^*$ (which is included in $(A^N)^*$) and there is $b_1\in(C^N)^*$ (which is included in $(A^N)^*$) such that $i_1=~^{a_1}i,j_1=~^{b_1}j$, we obtain
$$i_1(~^{a_1}f)=(~^{a_1}f)i_1=~^{a_1}f,\quad j_1(~^{b_1}f_1)=(~^{b_1}f_1)j_1=f_1.$$
We claim that $f':=~^{b_1}f_1$ is the primitive idempotent (with defect group $Q$) which satisfies  condition $b')$ of Proposition \ref{prop53}. Let $a_2\in (A^N)^*$ such that $f=~^{a_2}f_1.$ The claim   is true because
$$~^{a_1a_2b_1^{-1}}f'=~^{a_1a_2}f_1=~^{a_1}f\quad \mbox{ and   \quad  }      j_1f'=f'j_1=f',$$
where $a_1a_2b_1^{-1}\in(A^N)^*.$

Since clearly $ij=ji=i$, we  show next that there is a defect group $D$ of $i$ (hence of $\alpha$) which satisfies $D\cap N=Q,$ i.e. we show that condition $a')$ of Proposition \ref{prop53} is verified. Let $R$ be a defect group of $\alpha$. There is $a\in A^R$ such that
$$i=Tr_R^G(a)=\underset{x\in [N\setminus G/R]}{\sum}Tr_{N\cap ~^xR}^N(~^xa)\in\underset{x\in[N\setminus G/R]}{\sum}A_{N\cap~^xR}^N$$
Since $if=fi=f$ the idempotent $f$ belongs to some ideal $A_{N\cap~^xR}^N$, implying $~^yQ\leq N\cap~^xR$ for some $y\in N$. Then $Q\leq N\cap ~^gR$ for some $g\in G$. Let $T$ denote a defect group of $j$ regarded as an idempotent of $A^G.$ Then $j\in C_T^G\subseteq A_T^G$. We also have $ij=ji=i$ in $A^G$, hence  we may assume $R\leq T$. Next, there is $b\in C^T$ with
$$j=Tr_T^G(b)=\underset{y\in [N\setminus G/T]}{\sum}Tr_{N\cap ~^yT}^N(~^yb)\in\underset{y\in[N\setminus G/T]}{\sum}C_{N\cap ~^yT}^N$$
By our assumption $j\in C_{N\cap ~^yT}^N$ for some $y\in G$. Since $\Br_T(j)\neq 0$ we get $\Br_{~^yT}(j)\neq 0$ and then $\Br_{N\cap ~^yT}(j)\neq 0 $. This means $~^tQ=N\cap ~^yT$ for some $t\in N$, equivalently $Q = N\cap ~^zT$ for some $z\in G$. Finally $$Q\leq N\cap ~^gR\leq N\cap ~^gT = ~^{gz^{-1}}Q,$$ forcing $gz^{-1}\in N_G(Q)$ and $$Q=N\cap ~^gR = N\cap ~^gT.$$
\item[(ii)]
%$"(ii)\Rightarrow (iii)"$

First we assume that $\alpha$ strongly covers $\beta$ and let $j\in \beta\cap C^G$. Then, by $a'')$ of Definition \ref{defn53}, there is $i\in\alpha$ such that $ij=ji=i$, which is equivalent to $i\in (jAj)^G$. Using Definition \ref{defn53}, $b'')$ there is a point $\epsilon'\subseteq A^N$ with defect group $Q$ such that for some $f,f_1\in\epsilon'$ we have
$$jf_1=f_1j=f_1\quad if=fi=f.$$
It follows
$$jf=j(if)=(ji)f=if=(fi)j=fj=f,$$
which is equivalent to the fact that $f\in(jAj)^N$ remains a primitive idempotent in $(jAj)^N$. 
Similarly $jf_1=f_1j=f_1$ is equivalent to the fact that $f_1$ is, a primitive idempotent, in $(jAj)^N.$ Let $\epsilon\subseteq (jAj)^N$ be the point corresponding to $f$ (i.e. 
$\epsilon:=\{afa^{-1}|a\in ((jAj)^N)^*\}$) and let $$F:(jAj)^N\rightarrow A^N$$ be the embedding determined  by the inclusion. Then $\epsilon'$ is the unique point of $A^N$ corresponding to  $\epsilon$ through $F.$ Since $f,f_1\in\epsilon'$ and $f\in \epsilon$ it follows that $f_1\in\epsilon$. Clearly $\epsilon$ has the same defect $Q$ as $f$.

%??Let $a(A^N)^*$ such that $f_1=~^{a}f$??

%$"(iii)\Rightarrow (ii)"$
For the converse implication, fix $j\in \beta\cap C^G$. Then there is $i\in\alpha$ such that $i\in (jAj)^G$ and there is a point $\epsilon\subseteq (jAj)^N$ with defect group $Q$ satisfying $if=fi=f$ for some $f\in \epsilon$. 
Then statement $a'')$ of Definition \ref{defn53} is verified. Let $F$ be the same embedding as above and set $\epsilon'=F(\epsilon)$. Then $\epsilon'$ is a point of $A^N$ with defect group $Q$ and  $f\in\epsilon'$ satisfying $jf=fj=f$.
\end{itemize}

\end{proof}
%\begin{rem} Note that the assumption that for any point $\delta$ of $A^N$ admitting the same defect group $Q$ such that $m_{\beta}^{\delta}\neq 0,$ we have $m_{\beta}^{\delta}=m_1^{\delta}$ is only needed for the implication $"(i)\Rightarrow (ii)".$
%\end{rem}

%%%%%%%%%%%%%%%%%%%%%%%%%%%%%%%%%%%%%%%%%%%%%%%%%%%%%%%%%%%%%%%%%%%%%%%%%%%%%%%%%%
\section{On pairs determined by covering points} \label{sec6}
In this section  we assume that $N$ is a normal subgroup of $G,$ that  $A$ is a $p$-permutation  $G$-algebra  which contains $C$ and $A$ is also an $N$-interior $G$-subalgebra such that $1_A
\in C.$
Let $c$ be  primitive idempotent of $C^N$ such that $c\in C^G$ and let $\beta\subseteq C^N$ be the point containing $c.$ We fix $Q,$ a defect group of $N_{\beta}$. Since $C$ remains a $p$-permutation $G$-algebra, see \cite[Corollary 27.2]{Th},  we choose a maximal $(C,c,N)$-Brauer pair $(Q,f_Q)$ such that $f_Q$ remains  primitive in $C(Q)^{C_N(Q)}.$ It follows that we are in the setup of Section 3 and of Section 4, hence we can introduce $F_1$ and $F_2$ and the conclusion of Theorem \ref{theorem39} holds for $(C,c,G).$

\begin{prop}
\label{proposition53}
Let $\alpha\subseteq A^G$ be a point that strongly covers the point $\beta$. %We assume that for any point $\delta$ of $A^N$ admitting the same defect group $Q,$ such that $m_{\beta}^{\delta}\neq 0,$ we have $m_{\beta}^{\delta}=m_1^{\delta}.$ 
Then there is a defect group $D$ of $\alpha$ such that $D\cap N=Q$ and a block $f_D$ in $C(D)$ such that $(D,f_D)\in F_1$.
\end{prop}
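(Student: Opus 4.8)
The plan is to build the defect group $P$ and the block $f_P$ in two stages, first handling the defect group and then the block, relying on the earlier material of the paper.

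First I would address the defect group. By hypothesis $\alpha$ covers $c$, which is $G$-invariant and has defect group $Q$ in $N$. Applying Proposition \ref{proposition44} (whose hypotheses are met since $c\in C^G$), the point $\alpha$ covers $c$ in the sense of Definition \ref{definition41}, so $\alpha$ admits a defect group $P$ with $P\cap N=Q$; equivalently, this is exactly the content of Corollary \ref{corollary45} applied with $G_c=G$. Fix such a $P$ together with a primitive idempotent $i\in\alpha$ with defect group $P$, and note $\Br_P(i)\neq 0$. Since $\alpha$ covers $c$, by Corollary \ref{corollary47} we may assume $i\in(cAc)^G$, i.e. $ci=ic=i$, and that there is $f\in\epsilon$, a point of $N$ on $cAc$ with defect group $Q$, with $if=fi=f$.

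Next I would produce the block $f_P$. Because $ci=ic=i$, applying $\Br_P$ gives $\Br_P(c)\Br_P(i)=\Br_P(i)\neq 0$; in particular $\Br_P(c)\neq 0$, so $P$ is a subgroup for which a $(C,c,G)$-Brauer pair at $P$ exists. Here I must be slightly careful: $\Br_P$ is taken in $A$, but $c\in C$ and $C$ is a direct summand of $A$ as $k$-modules compatibly with the $G$-action, so the Brauer construction restricts, $C(P)$ sits inside $A(P)$, and $\Br_P(c)\in C(P)$ — this compatibility is what lets me pass between the two algebras. Now let $f_P$ be the unique block of $C(P)$ with $\Br_P(c)f_P=\Br_P(c)$ (it is a block of $C(P)c$, using that $c$ is primitive in $C^N$, hence $\Br_P(c)$ is a sum of blocks of $C(P)$; invariance under the relevant group action makes the choice canonical along the lines of Proposition \ref{prop22}). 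Then $\Br_P(c)f_P\neq 0$, so $(P,f_P)$ is a $(C,c,G)$-Brauer pair. To see $(P,f_P)\in F_1$ I must check $(Q,f_Q)\leq (P,f_P)$ as $(C,c,G)$-Brauer pairs. For this I would use that $f$ lies in the point $\epsilon$ on $N$ with defect group $Q$, so $\Br_Q(f)\neq 0$ and $\Br_Q(f)$ meets the block $f_Q$; then from $if=fi=f$ and $ci=i$ one gets a containment of the associated local data, and Corollary \ref{corollary1.9}-type arguments (as already used in the proof of Proposition \ref{Proposition35}, via \cite[Corollary 1.9]{BrPu}) upgrade this to $(Q,f_Q)\leq(P,f_P)$. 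Finally, $P\cap N=Q$ by construction, so all three assertions hold.

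The main obstacle I expect is the last step: verifying $(Q,f_Q)\leq(P,f_P)$ rather than merely having both be Brauer pairs with $Q\trianglelefteq P$. The issue is that the covering data of Definition \ref{definition43} lives in the non-unital subalgebra $cAc$ and is phrased via the point $\epsilon$, whereas inclusion of Brauer pairs is phrased via primitive idempotents of $A^P$ (or $C^P$) lifting local idempotents. Bridging these requires lifting $f$ through $\Br_Q$ to a primitive idempotent of $A^Q$ in the right point, transporting it into $C$ using the direct-summand decomposition, and then running the $\Br_P$-versus-$\Br_Q$ commutative-diagram computation exactly as in the proof of Proposition \ref{Proposition35}; keeping track of which algebra ($A$, $C$, $cAc$) each idempotent lives in throughout is the delicate bookkeeping. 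Everything else is a direct appeal to Proposition \ref{proposition44}, Corollary \ref{corollary47}, and Proposition \ref{prop22}.
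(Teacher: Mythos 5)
There is a genuine gap in the second half of your argument, at exactly the step you flag as the "main obstacle" but never resolve. A first, smaller problem: you define $f_P$ as "the unique block of $C(P)$ with $\Br_P(c)f_P=\Br_P(c)$". No such unique block exists in general: $\Br_P(c)$ is a sum of possibly several blocks of $C(P)$, and any block occurring in that sum yields a $(C,c,G)$-Brauer pair at $P$. The whole difficulty is to select one for which $(Q,f_Q)\leq(P,f_P)$ holds, and your "canonical choice" does not do that.

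The missing idea is the conjugation adjustment. The paper's route is: take the defect group $R$ of $\alpha$ with $R\cap N=Q$ from Proposition \ref{proposition44}, decompose $c=\sum_{j\in J}j$ into primitive idempotents of $C^{R}$, choose $j$ with $\Br_R(j)\neq 0$, and apply \cite[Corollary 1.9]{BrPu} to this primitive idempotent to get a block $\overline{f_R}$ of $C(R)$ and a block $e$ of $C(Q)$ with $(Q,e)\leq(R,\overline{f_R})$. The block $e$ need \emph{not} be $f_Q$: all one can say --- using the identity $\Br_Q(c)=\sum_{x\in N_G(Q)/H}f_Q^{x}$, which the paper first establishes from the primitivity of $\Br_Q(c)$ in $C(Q)^{N_G(Q)}$ --- is that $e=f_Q^{x}$ for some $x\in N_G(Q)$. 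One then conjugates the inclusion by $x^{-1}$ to get $(Q,f_Q)\leq(R^{x^{-1}},\overline{f_R}^{x^{-1}})$ and sets $P:=R^{x^{-1}}$, which is still a defect group of $\alpha$ with $P\cap N=Q$ precisely because $x$ normalizes $Q$. Your proposal fixes $P$ once and for all at the beginning and then asserts that the covering data ($if=fi=f$ with $f\in\epsilon$) forces $\Br_Q(f)$ to "meet the block $f_Q$"; this is unjustified, since the block of $C(Q)$ involved could be any $N_G(Q)$-conjugate of $f_Q$, and for the $P$ you fixed the containment $(Q,f_Q)\leq(P,f_P)$ may simply fail. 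The statement is only saved by allowing $P$ to be replaced by a conjugate, which your argument never does.
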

\begin{proof}
By Definition  \ref{definition41} we have a defect group $R\leq G$ of $\alpha$ with $R\cap N=Q$. We show next that 
\begin{equation}\label{eq161}
\Br_Q^C(c)f_Q=f_Q=f_Q\Br_Q^C(c) \ \ \text{in}\ C(Q)^{C_N(Q)}.
\end{equation}
We have the inclusion
$C(Q)^{N_G(Q)}\subseteq C(Q)^{C_N(Q)},$
hence since $\Br_Q(c)\in C(Q)^{N_G(Q)}$ it follows that $\Br_Q(c)$ is an idempotent of 
$C(Q)^{C_N(Q)}$ satisfying $\Br_Q(c)f_Q\neq 0$. Since $f_Q$ is primitive in $C(Q)^{C_N(Q)}$ we apply Lemma \ref{Lemma71} to obtain (\ref{eq161}).

For any $x\in N_G(Q)$,  (\ref{eq161}) yields  
\begin{equation}\label{eq261}
\Br_Q^C(c)\cdot ~^xf_Q=~^xf_Q\cdot \Br_Q^C(c)=~^xf_Q,
\end{equation} an equation   viewed in  $C(Q)$. If we set $H=N_G(Q,f_Q)$ then $s:=\underset{x\in [N_G(Q)/H]}{\sum}~^xf_Q$ is an idempotent lying in $C(Q)^{N_G(Q)}\cap Z(C(Q))$   such that $$\Br_Q^C(c)=\Br_Q^C(c)s=s\  \Br_Q^C(c)=s$$ in $C(Q)^{N_G(Q)}.$ The  equality on the right follows from (\ref{eq261}). The  equality on the left  holds since $\Br_Q^C(c)$ is a primitive idempotent of $C(Q)^{N_G(Q)}$ and by  applying  Lemma \ref{Lemma71}. In particular $s=\Br_Q^C(c)$ lies in $$Z(C(Q))\cap C(Q)^R\subseteq Z(C(Q)^R).$$ 

 Since $\alpha$ strongly covers $\beta,$ by Definition  \ref{defn53} there is $i\in\alpha$ with $ic=i=ci$. It follows that $\Br_R^C(c)\neq 0$. The idempotent $\Br_R^C(c)$ belongs to $Z(C(R))$, since  $\Br_Q^C(c)$ is in $Z(Q)$ and $\Br_R^C=\Br_R^{C(Q)}\circ \Br_Q^C|_{C^R}$ is a composition of  surjective homomorphimsms of $k$-algebras. Let $\sum_{f\in \mathfrak{B}} f$ be a  decomposition of $\Br_R^C(c)$ into a sum of blocks, where $\mathfrak{B}$ is a set of block idempotents of  $Z(C(R)).$ Consider  $c=\underset{j\in J}{\sum}j,$  a decomposition of $c$ into  a sum of pairwise orthogonal primitive idempotents of $C^R$. There is $j\in J$ that satisfies the relations    $\Br_R^C(j)\neq 0$ and $cj=jc=j$.  The idempotent 
 $$\Br_R^C(j)=\Br_R^C(cj)=\sum_{f\in \mathfrak{B}} f\Br_R(j)$$ is primitive in $C(R)$ and there is a unique block   $\overline{f_R}\in \mathfrak{B}$ such that $$\Br_R(j)= \Br_R^C(j)\overline{f_R}=\overline{f_R}\Br_R(c).$$
Since $\Br_R^C(j)\neq 0$ then  $\Br_Q^C(j)\neq 0$ and since $j=jc$ we also have $$\Br_Q^C(j)\Br_Q^C(c)=\Br_Q^C(j)s\neq 0.$$ This last relation determines at least one block, say $~^xf_Q$ (which appears in  $s$), with $\Br_Q^C(j)~^xf_Q\neq 0$. According to \cite[Corollary 1.9]{BrPu} the blocks $\overline{f_R}$ of $C(R)$ and $~^xf_Q$ of $C(Q)$ must verify  $(Q,~^xf_Q)\leq (R,\overline{f_R})$. Because $x\in N_G(Q),$ this   becomes $(Q,f_Q)\leq (~^{x^{-1}}R,~^{x^{-1}}\overline{f_R})$. We set $D:=~^{x^{-1}}R$ and $f_D:=~^{x^{-1}}\overline{f_R}$ to conclude the proof.
\end{proof}

%%%%%%%%%%%%%%%%%%%%%%%%%%%%%%%%%%%%%%%%%%%%%%%%%%%%%%%%%%%%%%%%%%%%%%%%%%%%%%%%%%
\section{Isomorphism of fusion subcategories} \label{sec7}
%We collect all the assumptions which we need in this section in the following setup.
Throughout this section we use the following setup.
\begin{set}\label{set} We consider     a $G$-interior $k$-algebra $A,$ with  structural map 
$G\rightarrow A^*$ such that $$G\ni u\mapsto u\cdot 1_A=1_A\cdot u\in A,$$ for any $u\in G.$   This makes  $A$ into a $k[G\times G]$-module, explicitly   $$(u,v)\cdot a=u\cdot a\cdot v^{-1}\in A,$$ for any $u,v\in G,a\in A$. In particular we assume that
this action stabilizes  a $k$-basis of $A,$ so that $A$ is also a $p$-permutation $G$-algebra.  We denote by  $C$ an $N$-interior  $G$-subalgebra of $A$ that contains the identity element $1_A.$ 
Let $c$ denote a  primitive idempotent of $C^N$ such that $c\in C^G$ and let $\beta\subseteq C^N$ be the point containing $c.$  We fix $Q,$ a defect group of $N_{\beta}$. As in Section \ref{sec6}, we consider $(Q,f_Q),$  a maximal $(C,c, N)$-Brauer pair such that  $f_Q$ remains  primitive  in $C(Q)^{C_N(Q)}.$ Further we set $H = N_G(Q,f_Q)$. 

Let $X:=\overline{N}^{\Aut(Q)}_A(Q)$  denote the extended Brauer quotient, the  $N_G(Q)$-interior algebra constructed in \cite{PuZh}. Note that $C(Q)$ is an $N_N(Q)$-interior $N_G(Q)$-subalgebra of $X$ 
and if $A=kG$ then \cite[Remark 4.5, 2)]{CoMa} states $X=kN_G(Q).$

We add the following assumption on $f_Q.$ 
We assume that $f_Q$  \textit{normalizes} a $k$-basis $\mathcal{B}$ of $X^{N_G(Q)}.$  Explicitly  for any $s\in \mathcal{B}$ we have  $f_Q\cdot s=s'\cdot f_Q$ for some $s'\in\mathcal{B}.$ Since $f_Q\in Z(C(Q))$  the point of $C(Q)^{C_N(Q)}$ containing $f_Q$ has a unique element. We  denote this point by $\{f_Q \}.$ We also assume that $A$ is  projective as a $k[G\times 1]$- module and as a $k[1\times G]$-module.
\end{set}

\begin{lem}
\label{lemma61}
Any point of  $X^H$ that strongly covers $\{f_Q\}$ has a defect group $D$ such that $D\cap N = Q$ and, there is a block $f_D$ of $C(D)$ such that $(Q,f_Q)\leq (D,f_D)$ as $(C(Q),f_Q, H)$-Brauer pairs.
\end{lem}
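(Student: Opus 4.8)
The plan is to combine the covering-point machinery of Sections~\ref{sec5} and~\ref{sec6} with the Brauer-pair equivalence of Section~\ref{sec4}, working inside the extended Brauer quotient $X=\overline{N}^{\Aut(Q)}_A(Q)$ rather than $A$ itself. First I would record that $Y=\overline{N}^{\Aut(Q)}_C(Q)$ is an $N_N(Q)$-interior $N_G(Q)$-subalgebra of $X$ and a direct summand of $X$ as $k$-module, so that the setup of Section~\ref{sec5} applies with $X$ in the role of $A$, $Y$ in the role of $C$, $N_G(Q)$ in the role of $G$, and $N_N(Q)$ in the role of $N$. The hypothesis that $A$ is an interior $G$-algebra with a $G\times G$-stable basis and is projective as a $k[N\times 1]$- and $k[1\times N]$-module is exactly what guarantees that $X$ is again a $p$-permutation algebra of the required type, so that the covering-point results are available here. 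I would then restrict attention to $H=N_G(Q,f_Q)$ acting on $X$; since $f_Q$ is (the image in $Y$ of) a primitive idempotent of $C(Q)^{C_N(Q)}$, and $C(Q)\otimes_{C_N(Q)}N_N(Q)$ embeds in $Y$ as an $N_N(Q)$-interior $N_G(Q)$-subalgebra, $f_Q$ may be viewed as a primitive $H$-stable idempotent of $Y^{N_N(Q)}$ with defect group $Q$ in $N_N(Q)$ (here $Q\trianglelefteq N_N(Q)$, so $Q$ is its own defect group).

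Next I would invoke Proposition~\ref{proposition53} applied to the pair $(X,Y,H,N_N(Q))$ in place of $(A,C,G,N)$: given a point $\beta\subseteq X^H$ covering $f_Q$, that proposition yields a defect group $P$ of $\beta$ with $P\cap N_N(Q)=Q$ and a block $f_P$ of $Y(P)$ such that $(P,f_P)\in F_1$, i.e.\ $(Q,f_Q)\leq (P,f_P)$ as $(Y,f_Q,H)$-Brauer pairs. Two translation steps remain. The first is to pass from ``$P\cap N_N(Q)=Q$'' to ``$P\cap N=Q$'': since $P$ is a $p$-subgroup of $H=N_G(Q,f_Q)\leq N_G(Q)$ and $Q\trianglelefteq P$ normalizes nothing larger, $P\cap N\leq N_G(Q)\cap N=N_N(Q)$, so $P\cap N=P\cap N_N(Q)=Q$; this is a short group-theoretic observation. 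The second is to identify the Brauer quotient $Y(P)$ with a block algebra of $C(P)$: because $Q\trianglelefteq P$, the extended Brauer quotient construction gives $Y(P)\cong \overline{N}^{\Aut(Q)}_C(Q)(P)$, and by the functoriality of the Brauer construction relative to the chain $Q\leq P$ this is canonically a summand of $C(P)$ (more precisely, $C(P)=\Br_P^{C}(C^P)$ factors through $\Br_P$ applied to $C(Q)$, cf.\ the transitivity $\Br_P=\Br_{P/Q}\circ\Br_Q$). Under this identification the block $f_P$ of $Y(P)$ corresponds to a block $f_P$ of $C(P)$, and the relation $(Q,f_Q)\leq(P,f_P)$ as $(Y,f_Q,H)$-Brauer pairs translates, via Proposition~\ref{prop23} (or Corollary~\ref{corollary34}), into the statement $(Q,f_Q)\leq(P,f_P)$ as $(C(Q),f_Q,H)$-Brauer pairs, which is exactly what is claimed.

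I expect the main obstacle to be the careful bookkeeping in the second translation step, namely making precise the compatibility of the extended Brauer quotient $Y=\overline{N}^{\Aut(Q)}_C(Q)$ with the ordinary Brauer construction $(-)(P)$ for $Q\leq P$, and checking that the block $f_P$ produced in $Y(P)$ genuinely is (or determines a unique) block of $C(P)$ satisfying $\Br_P(f_Q)f_P=f_P$ in the appropriate sense. The embedding $C(Q)\otimes_{C_N(Q)}N_N(Q)\hookrightarrow Y$ and the transitivity of Brauer homomorphisms should make this routine but unglamorous; once it is in place, everything else is a direct citation of Proposition~\ref{proposition53}, Proposition~\ref{proposition44}, and Proposition~\ref{prop23}. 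A secondary point to handle with care is the $N_G(Q)$-action on $\Aut(Q)$ (via \cite[Proposition 2.2]{CoTo}) so that $H$ really does act on $Y$ and on $Y(P)$ compatibly; but this is exactly the structure recorded at the start of Section~\ref{sec7}, so I would simply cite it.
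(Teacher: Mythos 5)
Your overall strategy --- feed the covering point into Proposition \ref{proposition53} applied inside the extended Brauer quotient and then translate the resulting Brauer pair back to $C(P)$ --- is the same as the paper's, but your instantiation breaks down at two points. First, you apply Proposition \ref{proposition53} to $(X,Y,H,N_N(Q))$ in place of $(A,C,G,N)$. For that proposition the fourth entry must be a normal subgroup of the third, but $N_N(Q)$ is in general not even contained in $H=N_G(Q,f_Q)$: elements of $N_N(Q)$ normalize $Q$ but need not fix the block $f_Q$. Relatedly, your claim that $f_Q$ is a primitive idempotent of $Y^{N_N(Q)}$ with defect group $Q$ ``because $Q\trianglelefteq N_N(Q)$'' is not an argument: primitivity of $f_Q$ in the fixed points of the much larger extended quotient, and the computation of its defect group there, both require proof. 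The paper avoids both problems by replacing $Y$ with the $H$-stable subalgebra $C(Q)\otimes_{C_N(Q)}QC_N(Q)$ and the normal subgroup with $QC_N(Q)$, which does lie in $H$ (because $f_Q\in C(Q)^{C_N(Q)}$) and is normal there; primitivity of $f_Q$ in $(C(Q)\otimes_{C_N(Q)}QC_N(Q))^{QC_N(Q)}$ is quoted from \cite[Theorem 3.1]{CoTo}, and the defect group is obtained from the observation that $(C(Q)\otimes u)(Q)=0$ for $u\in Q\setminus Z(Q)$, whence $(C(Q)\otimes_{C_N(Q)}QC_N(Q))(Q)=C(Q)$.

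Second, the step you yourself flag as the ``main obstacle'' --- identifying $Y(P)$ with a summand of $C(P)$ so that $f_P$ becomes a block of $C(P)$ --- is left unresolved, and with all of $Y$ in play it is not routine: $Y(P)$ a priori receives contributions from the twisted components $C(Q)\otimes n$ with $n\notin C_N(Q)$. This is exactly what the paper's smaller subalgebra is designed to eliminate: the same computation as above yields $(C(Q)\otimes_{C_N(Q)}QC_N(Q))(P)=(C(Q))(P)=C(P)$ on the nose, so $f_P$ is literally a block of $C(P)$ and the containment of Brauer pairs transfers without further argument. In short, the architecture of your proof is right, but as written the application of Proposition \ref{proposition53} is ill-posed and the concluding identification is missing; both are repaired by working with $C(Q)\otimes_{C_N(Q)}QC_N(Q)$ and $QC_N(Q)$ rather than with $Y$ and $N_N(Q)$.
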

\begin{proof}
Let $\bar{\gamma}\subset X^H$ be a point that strongly covers $\{f_Q\}\subseteq C(Q)^{C_N(Q)}.$ Let $D$ denote a defect group of $\bar{\gamma},$ hence $D\cap QC_N(Q)=Q.$ There is $\bar{j}\in \bar{\gamma}$ such that 
$$\bar{j}\in (f_QXf_Q)^H_{D}\subseteq \sum_{x\in [N_N(Q,f_Q)\setminus H/D]}(f_QXf_Q)^{N_N(Q,f_Q)}_{N_N(Q,f_Q)\cap ^{x}D}.$$
The block $f_Q$ of $C(Q)$ remains a primitive idempotent of $C(Q)^L$, for any subgroup $L$ of $H$ containing $QC_N(Q)$. In particular $\{f_Q\}$ is a point of $C(Q)^{N_N(Q,f_Q)}$ with  defect group $Q.$  
Any decomposition of $\bar{j}$  in $(f_QXf_Q)^{N_N(Q,f_Q)}$ contains at least one idempotent that belongs to a point, say $\overline{\epsilon}\subseteq (f_QXf_Q)^{N_N(Q,f_Q)},$ with defect group $N_N(Q,f_Q)\cap ^{x}D,$ for some $x\in H.$ 
Since  $\overline{\epsilon}$ is a subset of  $(f_QXf_Q)^{N_N(Q,f_Q)}_Q$ and, since
  $Q\leq N_N(Q,f_Q)\cap ^{x}D$ we must have 
  $D\cap N_N(Q,f_Q)=Q.$ It follows that $\overline{\gamma}$ strongly covers $\{f_Q\}\subseteq C(Q)^{N_N(Q,f_Q)}$ and this forces 
  $$Q=D\cap N_N(Q,f_Q)=D\cap N_N(Q)=D\cap N.$$   

We apply Proposition \ref{proposition53} with $H,QC_N(Q), X,$ $C(Q)$ and $\{f_Q\}$, in place of $G,N,A,C$ and $\beta$ respectively. 
 
\end{proof}

\begin{lem}\label{lem73'} The following statements hold:
\begin{itemize}
\item[a)] The $k$-algebras $\Br_Q^C(c)X^{N_G(Q)}\Br_Q^C(c)$ and $(f_QXf_Q)^H$ are isomorphic;
\item[b)] The above isomorphism determines a bijective correspondence between the points $\overline{\alpha}\subseteq X^{N_G(Q)}$ such that for some $\overline{i}\in\overline{\alpha}$ we have $\Br_Q^C(c)\cdot \overline{i}=\overline{i}\cdot \Br_Q^C(c)=\overline{i}$ and the points $\overline{\gamma}\subseteq X^H$ such that for some $\overline{j}\in\overline{\gamma}$ we have $\overline{j}\cdot f_Q=f_Q\cdot \overline{j}=\overline{j}.$  This bijection preserves the defect groups in the sense that any defect group of $\bar{\gamma}$  is also a defect group of $\bar{\alpha}$ and conversely, some $N_G(Q)$-conjugate of any   defect group of $\bar{\alpha}$ is a defect group of $\bar{\gamma}.$
\end{itemize}
\end{lem}
\begin{proof}
\begin{itemize}
\item[a)] The map
$$\Tr_H^{N_G(Q)}:(f_QXf_Q)^H\rightarrow(f_QXf_Q)_H^{N_G(Q)}, $$
$$(f_QXf_Q)^H\ni a\mapsto \Tr_H^{N_G(Q)}(a):=\sum_{x\in[N_G(Q)/H]}{}^xa\in (f_QXf_Q)_H^{N_G(Q)},$$

is a $k$-algebra isomorphism. It is easy to verify that an inverse map of $\Tr_H^{N_G(Q)}$ maps $b\in (f_QXf_Q)_H^{N_G(Q)}$ to $f_Qb\in (f_QXf_Q)^H$ and that $\Tr_H^{N_G(Q)}$ is an algebra homomorphism.  Next we show that $(f_QXf_Q)_H^{N_G(Q)}$ is an ideal of $\Br_Q^C(c)X^{N_G(Q)}\Br_Q^C(c)$. Let $a_1\in f_QXf_Q$ and let  $a_2\in \Br_Q^C(c)X^{N_G(Q)}\Br_Q^C(c)$. We have
$$\Tr_H^{N_G(Q)}(a_1)\cdot a_2=\Tr_H^{N_G(Q)}(a_1a_2)=\Tr_H^{N_G(Q)}(f_Qa_1f_Qa_2\Br_Q^C(c))$$
$$=\Tr_H^{N_G(Q)}(f_Qa_1a_2'f_Q\Br_Q^C(c))=\Tr_H^{N_G(Q)}(f_Qa_1a_2'f_Q),$$
where $a_2'$ is an element of $X^{N_G(Q)}$ such that $f_Qa_2=a'_2f_Q.$ 
 Since $$\Br_Q^C(c)=\Tr_H^{N_G(Q)}(f_Q)\in (f_QXf_Q)_H^{N_G(Q)}$$ we obtain $$(f_QXf_Q)_H^{N_G(Q)}=\Br_Q^C(c)X^{N_G(Q)}\Br_Q^C(c).$$

\item[b)] The $N_G(Q)$-interior algebra embedding 
$\Br_Q^C(c)X\Br_Q^C(c)\to X,$ the $H$-interior algebra embedding $f_QXf_Q\to X$ and  $a)$ determine the mentioned bijection. We only need to verify the second part of the assertion.

Consider a point $\overline{\alpha}\subseteq X^{N_G(Q)}$ and 
let $\overline{\gamma}\subseteq X^H$ be the point corresponding to $\overline{\alpha}.$  The mentioned embeddings determine unique points $\overline{\alpha}_1$ and $\overline{\gamma}_1,$ corresponding to $\overline{\alpha}$ and $\overline{\gamma}$ respectively, such that $\Tr_H^{N_G(Q)}(\overline{\gamma}_1)=\overline{\alpha}_1$ and $f_Q\cdot \overline{\alpha}_1=\overline{\gamma}_1.$  

It suffices  to show that $\overline{\alpha_1}$ and $\overline{\gamma_1}$ verify the statement on defect groups. Let $D$ be a defect group of $\overline{\gamma_1},$ then   $\overline{\alpha_1}\subseteq X_D^{N_G(Q)}$. Assuming by contradiction that $\overline{\alpha_1}\subseteq \sum_{T<D}X_T^D$ we obtain $f_Q\cdot \overline{\alpha_1}\subseteq \sum_{T<D}X_T^D.$ 
If $R\leq N_G(Q)$ is a defect group of $\overline{\alpha}_1$ it is clear that $^{x}R=D$ for some $x\in N_G(Q).$

\end{itemize}
\end{proof}

\begin{lem}\label{lem73''}  The  inclusion  $A(Q)\subseteq X$ of $N_G(Q)$-algebras determines a defect group preserving bijective correspondence between the points of  $ \Br_Q^C(c)A(Q)^{N_G(Q)}\Br_Q^C(c)$  and the points of $\Br_Q^C(c)X^{N_G(Q)}\Br_Q^C(c).$
\end{lem}
\begin{proof} Since $A=cAc\oplus (1-c)A(1-c)$  it follows that $cAc$  is a projective  $k[G\times 1]$-module (and  $k[1\times G]$-module). Also note that $(cAc)(Q)=\Br_Q^C(c)A(Q)\Br_Q^C(c)$ and
$$\overline{N}_{cAc}^{\Aut(Q)}(Q)\cong \Br_Q^C(c)X\Br_Q^C(c),$$
as $N_G(Q)$- algebras and as $N_G(Q)$-interior algebras respectively.
The conclusion now follows  by applying  \cite[Proposition 3.3]{PuZh} to the $G$-interior algebra $cAc.$

\end{proof}

We consider the set
\[\mathfrak{D}=\{D\leq G\mid D \mbox{ is a $p$-subgroup with } D\cap N=Q \}.\]

\begin{thm}\label{thecorresp} There is a  bijective correspondence preserving the defect groups in $\mathfrak{D}$ between the points of  $A^G$ that strongly cover $\beta$ and the points of  $X^H$ that strongly cover $\{f_Q\}$.
\end{thm}

\begin{proof}
Let $\alpha\subseteq A^G$ be a point that strongly covers $\beta.$ Let  $D\in \mathfrak{D}$ denote a defect group of $\alpha.$ 
According to \cite[Theorem 3.5]{Co},  Lemma \ref{lem73''} and Lemma \ref{lem73'} there is a unique point $\overline{\gamma}\subseteq X^H$ that corresponds to $\alpha.$ We obtain that $R:=^{g}D\leq H,$ for some $g\in N_G(Q),$ is a defect group of $\overline{\gamma}$ and that $\bar{j}\in f_QX^Hf_Q,$ for some $\bar{j}\in \overline{\gamma}.$
 We obtain 
$$\bar{j}\in (f_QXf_Q)_R^H\subseteq \sum_{x\in[QC_N(Q)\setminus H/R]}(f_QXf_Q)_{QC_N(Q)\cap{}^x R}^{QC_N(Q)}=(f_QXf_Q)_Q^{QC_N(Q)},$$
 since for all $x\in H$ we have
$$Q\leq QC_N(Q)\cap {}^x R={}^x(QC_N(Q)\cap R)\leq{}^x(N\cap R)={}^xQ=Q.$$ 
It follows that $\overline{j}\in (f_QXf_Q)_Q^{QC_N(Q)}$ and since $\Br_D^A(\overline{j})\neq 0$ we have $\Br_Q^A(\overline{j})\neq 0$. Consider $\bar{j}=\sum \overline{f},$ a primitive decomposition of $\bar{j}$ in $(f_QXf_Q)_Q^{QC_N(Q)}$. There is at least one idempotent in the above decomposition, say $\overline{f}$ such that $\Br_Q^A(\overline{f})\neq 0$. We denote by $\overline{\epsilon}$ the point of $(f_QXf_Q)^{QC_N(Q)}$ that contains $\overline{f}.$
Applying Proposition \ref{prop55'} ii) we obtain that $\overline{\gamma}$ strongly covers $\{f_Q\}.$

Conversely, let $\overline{\gamma}$ be a point of $X^H$ that strongly covers the point $\{f_Q\}\subseteq (C(Q))^{QC_N(Q)}.$ Denote by $D$ defect group of $\overline{\gamma}.$  Lemma \ref{lemma61} states that $D\in \mathfrak{D}.$ 
Proposition \ref{prop55'} ii) determines  $\overline{j}\in \overline{\gamma}$ such that $\overline{j}\in (f_QXf_Q)^H$. Consider the point $\overline{\gamma_1}$ of $(f_QXf_Q)^H$ that contains $\overline{j}$.

By applying Lemma \ref{lem73'} and Lemma \ref{lem73''} there is a unique point $\overline{\alpha}\subseteq A(Q)^{N_G(Q)},$ admitting defect group $D,$ such that $\bar{i}\in \Br^C_Q(c)A(Q)^{N_G(Q)}\Br^C_Q(c)$ for some $\bar{i}\in  \overline{\alpha}.$
Since we have
$$\bar{i}\in \Br_Q^C(c)\cdot A(Q)^{N_G(Q)}\cdot \Br_Q^C(c)\subseteq$$ 
$$ \sum_{x\in [N_N(Q)\setminus N_G(Q)/D]}(\Br_Q^C(c)\cdot A(Q)^{N_G(Q)}\cdot\Br_Q^C(c))_{N_N(Q)\cap {}^xD}^{N_N(Q)}$$
we obtain  $\bar{i}\in (\Br_Q^C(c)\cdot A(Q)\cdot \Br_Q^C(c))_Q^{N_N(Q)}.$ Let $\bar{i}=\sum \overline{f'}$ be a primitive decomposition of $\bar{i}$  in $ (\Br_Q^C(c)\cdot A(Q)\cdot \Br_Q^C(c))_Q^{N_N(Q)}.$ Since $\Br_D^{A(Q)}(\overline{i})\neq 0$ it follows that there  is a primitive idempotent $$\overline{f'}\in (\Br_Q^C(c)\cdot A(Q)\cdot \Br_Q^C(c))_Q^{N_N(Q)}$$  such that $\Br_Q^{A(Q)}(\overline{f'})\neq 0$. We denote by $\overline{\epsilon'}\subseteq (\Br_Q^C(c)\cdot A(Q)\cdot \Br_Q^C(c))^{N_N(Q)}$ the point that contains $\overline{f'}$. We obtained that $\overline{\alpha}$ strongly covers $\overline{\beta}.$ 

\end{proof}

Let $\alpha\subseteq A^G$ and $\overline{\gamma}\subseteq X^H$ be two points lying in the correspondence determined by Theorem \ref{thecorresp}.  Hence $\alpha$ strongly covers $\beta$ and $\overline{\gamma}$ strongly covers $\{f_Q\}.$ Let $D\in\mathfrak{D}$ be a defect group of $\overline{\gamma}$ that verifies the conclusion of Lemma \ref{lemma61}. So, there is a block $f_D\in C(D)$ such that  $(Q,f_Q)\leq (D,f_D)$ as $(C(Q),f_Q,H)$-Brauer pairs and as $(C,c,G)$-Brauer pairs.
We fix a maximal $(C,c,G)$-Brauer pair $(P,f_P)$ such that $(D,f_D)\leq (P,f_P).$ Let $\mathcal{C}$ and $\mathcal{D}$ be the categories defined as in Definition \ref{defnCD}, for the triple $(C,c, G)$. Let $\mathcal{C}(D,f_D)$ denote the full subcategory of $\mathcal{C}$  consisting of $(C,c,G)$-Brauer pairs $(T,f_T)$ such that
$(T,f_T)\leq(D,f_D).$ Let $\mathcal{D}(D,f_D)$ denote the full subcategory of $\mathcal{D}$ consisting of $(C(Q),f_Q,H)$-Brauer pairs $(T,f_T)$ with $(T,f_T)\leq(D,f_D)$.

As a consequence of Theorem \ref{thecorresp} and Theorem \ref{theorem39} we obtain the final main result of our paper.
\begin{thm}\label{theorem63}
The isomorphism between the categories $\mathcal{C}$ and $\mathcal{D}$ determines an isomorphism between  $\mathcal{C}(D,f_D)$ and $\mathcal{D}(D,f_D)$.
\end{thm}

\textbf{Acknowledgments.}
This work was supported by a grant of the Ministry of Research, Innovation and Digitization, CNCS/CCCDI–UEFISCDI, project number PN-III-P1-1.1-TE-2019-0136, within PNCDI III.

%%%%%%%%%%%%%%%%%%%%%%%%%%%%%%%%%%%%%%%%%%%%%%%%%%%%%%%%%%%%

\end{document}